\newtheorem{theorem}{Theorem}[section]
\newtheorem{corollary}[theorem]{Corollary}
\newtheorem{proposition}[theorem]{Proposition}
\theoremstyle{definition}
\numberwithin{equation}{section}
{\Large }
\newcommand{\R}{\ensuremath{\mathbb{R}}}
\newcommand{\N}{\ensuremath{\mathbb{N}}}
\newcommand{\cC}{\mathcal{C}}
\newcommand{\X}{\mathbb{X}}
\newcommand{\cD}{\mathcal{D}}
\newcommand{\cL}{\mathcal{L}}
\newcommand{\cM}{\mathcal{M}}
\newcommand{\bx}{\mathbf x}
\newcommand{\tp}{p{\rm -var}}
\newcommand{\tq}{q{\rm -var}}
\newcommand{\talpha}{\alpha{\rm -Hol}}
\newcommand{\tnu}{\nu{\rm -Hol}}
\newcommand{\cB}{\mathcal{B}}
\newcommand{\ty}{\bar y}
\newcommand{\ltn}{\ensuremath{\left| \! \left| \! \left|}}
\newcommand{\rtn}{\ensuremath{\right| \! \right| \! \right|}}
\title[Controlled differential equations as rough integrals]{Controlled differential equations \\as rough integrals}
\author[Luu Hoang Duc ]{Luu Hoang Duc }
\address[LUU HOANG DUC]{Max-Planck-Institute for Mathematics in the Sciences, Leipzig, Germany \& Institute of Mathematics, Viet Nam Academy of Science and Technology}
\email{{\tt duc.luu@mis.mpg.de, lhduc@math.ac.vn}}
\keywords{stochastic differential equations (SDE), rough path theory, rough integrals, rough differential equations.}
\subjclass[2010]{60H10, 37B25, 60G22, 93E15}
\begin{document}

\begin{abstract}
We study controlled differential equations with unbounded drift terms, where the driving paths are $\nu$ - H\"older continuous for $\nu \in (\frac{1}{3},\frac{1}{2})$, so that the rough integral is interpreted in the Gubinelli sense \cite{gubinelli} for controlled rough paths. Similar to the rough differential equations in the sense of Lyons \cite{lyons98} or of Friz-Victoir \cite{friz}, we prove the existence and uniqueness theorem for the solution in the sense of Gubinelli, the continuity on the initial value, and the solution norm estimates. 
\end{abstract}

\maketitle

%%%%%%%%%%%%%%%%%%%%%%%%%%%%%%%%%%%%%%%%%%%%%%%%%%%%%%%%%%%%%%%%%%%%%%%%%%%%%%%%

\section{Introduction}

This paper studies the following controlled differential equation
\begin{equation}\label{fSDE0}
dy_t =f(y_t)dt + g(y_t)d x_t,\quad \forall t\in [a,b],\ y_a \in \R^d,
\end{equation}
where the driving path $x$ belongs to a certain space of H\"older continuous functions. Such system is understood as a pathwise approach to solve a stochastic differential equation driven by a H\"older continuous stochastic process, for instance the path $x$ can be a realization of a fractional Brownian motion $B^H$  \cite{mandelbrot} with Hurst exponent $H \in (0,1)$. In this circumstance, equation \eqref{fSDE0} is often understood in the language of rough path theory, and its solutions are often defined in the sense of Lyons \cite{lyons98}, \cite{lyonsetal07}, or in the sense of Friz-Victoir \cite{friz}, \cite{BRSch17}. It is important to note that such definitions of rough differential equations do not specify what a rough integral is. 

On the other hand, in the simplest case of Young differential equations, the rough integrals are defined in the Young sense \cite{young}, and the existence and uniqueness theorem is well-known, see e.g. \cite{lyons94}, \cite{zahle}, \cite{nualart01}, \cite{congduchong17}. Recent results \cite{ducGANSch18}, \cite{duchong19} on the asymptotic stochastic stability and the existence of random attractors \cite{crauelkloeden} for the random dynamical systems \cite{arnold} generated by Young equations show a very effective method of applying the semigroup technique to estimate Young integrals and the discrete Gronwall lemma to derive the stability criterion \cite{duchong19}. To extend this method and stability results to rough differential equations, a first and necessary step is to define the rough integrals so that the rough system \eqref{fSDE0} can be understood in the integral form. Fortunately, this is feassible if one defines the rough integrals in the sense of Gubinelli \cite{gubinelli}, \cite{frizhairer} for controlled rough paths. An alternative approach is to define rough integral using fractional calculus, as studied for example in \cite{nualarthu}, \cite{GALSch16}, \cite{hessneamtu}.

Our aim in this paper is therefore to close the gap by proving similar results to \cite{BRSch17} for rough system with the unbounded drift term using rough integrals in the sense of Gubinelli. The main results of this paper are the existence and uniqueness theorem for the solution of \eqref{fSDE0}, the continuity of the solution on the initial condition, and the estimates of the solution norms.

To study the rough differential equation \eqref{fSDE0}, we impose the following assumptions.

(${\textbf H}_1$) $f: \R^d \to \R^d$ is globally Lipschitz continuous with the Lipschitz constant $C_f$;

(${\textbf H}_2$) $g$ either belongs to $C^3_b(\R^d,\cL(\R^m,\R^d))$ such that 
\begin{equation}\label{gcond}
\|g\|_\infty, C_g := \max\Big\{\|Dg\|_\infty, \|D_g^2\|_\infty, \|D^3_g\|_\infty \Big\} < \infty,
\end{equation}
or it has a linear form $g(y) = C y + g(0)$, where $C \in \cL(\R^d,\cL(\R^m,\R^d))$ such that $\|C\| \leq C_g$;

(${\textbf H}_3$) for a given $\nu \in (\frac{1}{3},\frac{1}{2})$, $x$ belongs to the space $\cC^{\nu}([a,b], \R^m)$ of all continuous paths which is of finite $\nu$ - H\"older norm on an interval $[a,b]$.

The paper is organized as follows. Section 2 is devoted to present a brief summary of rough path theory and the definition of rough integral for controlled rough paths in the sense of Gubinelli. Section 3 is devoted to the main results of this paper, namely we first give a simple proof in Theorem \ref{RDElinear} on the existence and uniqueness and norm estimates of the solution of \eqref{fSDE0} in the case $g$ of linear form. To prove the similar results for the nonlinear case, we prove Theorem \ref{prop2} on the differentiability of the solution of the system without the unbounded drift with respect to the initial value, and then apply the so-called {\it Doss-Sussmann technique} \cite{Sus78}, \cite{ImkSchm01} to transform equation \eqref{fSDE0} into an ordinary differential equation, thereby prove in Theorem \ref{RDEexist} the existence of solution of the original rough equation. The estimates of solution supremum and $p$ - variation norms are proved in Theorem \ref{RDEnorm}, which then helps to derive Theorem \ref{continuity} on the uniqueness and the continuity of the solution of \eqref{fSDE0} on the initial value.

\section{Rough integrals}
\subsection{Rough paths}
Let us present in this preparation section a short summary on rough path theory and rough integrals. Given any compact time interval $I = [\min I, \max I] \subset \R$, we write $|I| := \max{I} - \min{I}$ and $I^2 := I \times I$. For any finite dimensional vector space $W$, denote by $C(I,W)$ the space of all continuous paths $y: I \to W$ equipped with sup norm $\|\cdot\|_{\infty,I}$ given by $\|y\|_{\infty,I}=\sup_{t\in I} \|y_t\|$, where $\|\cdot\|$ is the norm in $W$. We write $y_{s,t}:= y_t-y_s$. For $p\geq 1$, denote by $C^{p{\rm-var}}(I,W)\subset C(I,W)$ the space of all continuous path $y:I \to W$ of finite $p$-variation 
\begin{eqnarray}
\ltn y\rtn_{p\text{-var},I} :=\left(\sup_{\Pi(I)}\sum_{i=1}^n \|y_{t_i,t_{i+1}}\|^p\right)^{1/p} < \infty,
\end{eqnarray}
where the supremum is taken over the whole class of finite partition of $I$. $C^{p{\rm-var}}(I,W)$ equipped with the $p-$var norm
\begin{eqnarray*}
	\|y\|_{p\text{-var},I}&:=& \|y_{\min{I}}\|+\ltn y\rtn_{p\rm{-var},I},
\end{eqnarray*}
is a nonseparable Banach space \cite[Theorem 5.25, p.\ 92]{friz}. Also for each $0<\alpha<1$, we denote by $C^{\alpha}(I,W)$ the space of H\"older continuous functions with exponent $\alpha$ on $I$ equipped with the norm
\begin{equation}\label{holnorm}
\|y\|_{\alpha,I}: = \|y_{\min{I}}\| + \ltn y\rtn_{\alpha,I},\quad \text{where} \quad \ltn y\rtn_{\alpha,I} :=\sup_{\substack{s,t\in I,\\ s<t}}\frac{\|y_{s,t}\|}{(t-s)^\alpha} < \infty.
\end{equation}
We now introduce the concept of rough paths. Following \cite{frizhairer}, a couple $\bx=(x,\X) \in \R^m \oplus (\R^m \otimes \R^m)$, where $x \in C^\alpha(I,\R^m)$ and 
\[
\X \in C^{2\alpha}(I^2,\R^m \otimes  \R^m):= \{\X \in C(I^2,\R^m \otimes  \R^m):  \sup_{\substack{s, t \in I,\\s<t}} \frac{\|\X_{s,t}\|}{|t-s|^{2\alpha}} < \infty \}, 
\]
is called a {\it rough path} if it satisfies Chen's relation
\begin{equation}\label{chen}
\X_{s,t} - \X_{s,u} - \X_{u,t} = x_{s,u} \otimes  x_{u,t},\qquad \forall \min{I} \leq s \leq u \leq t \leq \max{I}. 
\end{equation}
$\X$ is called a {\it L\'evy area} for $x$ and is viewed as {\it postulating} the value of the quantity $\int_s^t x_{s,r} \otimes  dx_r := \X_{s,t}$ where the right hand side is taken as a definition for the left hand side. Denote by $\cC^\alpha(I, \R^m \oplus (\R^m \otimes \R^m)) \subset C^\alpha(I,\R^m) \oplus C^{2\alpha}(I^2,\R^m \otimes \R^m)$ the set of all rough paths $\bx$ on $I$, then $\cC^\alpha(I)$ is a closed set (but not a linear space), equipped with the rough path semi-norm 
\begin{eqnarray}\label{translated}
\ltn \bx \rtn_{\alpha,I} &:=& \ltn x \rtn_{\alpha,I} + \ltn \X \rtn_{2\alpha,I^2}^{\frac{1}{2}},\quad \text{where}\qquad\\
 \ltn \X \rtn_{2\alpha,I^2}&:=& \sup_{s, t \in I;s<t} \frac{\|\X_{s,t}\|}{|t-s|^{2\alpha}} < \infty.  \notag
\end{eqnarray}
Throughout this paper, we will fix parameters $\frac{1}{3}< \alpha < \nu <\frac{1}{2}$ and  $p = \frac{1}{\alpha}$ so that $C^\alpha(I,W) \subset C^{\tp}(I,W)$. We also consider the $\tp$ semi-norm 
\begin{eqnarray}\label{pvarnorm}
\ltn \bx \rtn_{\tp,I} &:=& \Big(\ltn x \rtn^p_{\tp,I} + \ltn \X \rtn_{\tq,I^2}^q\Big)^{\frac{1}{p}}, \quad \text{for}\quad   q=\frac{p}{2} \quad \text{and} \\
\ltn \X \rtn_{\tq,I^2} &:=& \left(\sup_{\Pi(I)}\sum_{i=1}^n \|\X_{t_i,t_{i+1}}\|^q\right)^{1/q}, \notag
\end{eqnarray}
where the supremum is taken over the whole class of finite partition of $I$.  Sometimes, we write $\cC^\alpha(I)$ for abbreviation to neglect the value space for simplicity of presentation.

\subsection{Rough integrals}
We also introduce the construction of rough integrals. Following \cite{gubinelli}, a path $y \in C^\alpha(I,W)$ is called to be {\it controlled by} $x \in C^\alpha(I,\R^m)$ if there exists a tube $(y^\prime,R^y)$ with $y^\prime \in C^\alpha(I,\cL(\R^m,W)), R^y \in C^{2\alpha}(I^2,W)$ such that
\begin{equation}\label{controlRP}
y_{s,t} = y^\prime_s   x_{s,t} + R^y_{s,t},\qquad \forall \min{I}\leq s \leq t \leq \max{I}.
\end{equation}
$y^\prime$ is called Gubinelli derivative of $y$, which is uniquely defined as long as $x \in C^\alpha(I,\R^m)\setminus C^{2\alpha}(I,\R^m)$ (see \cite[Proposition 6.4]{frizhairer}). Denote by $\cD^{2\alpha}_x(I)$ the space of all the couple $(y,y^\prime)$ that is controlled by $x$, then $\cD^{2\alpha}_x(I)$ is a Banach space equipped with the norm
\begin{eqnarray*}
	\|(y,y^\prime)\|_{x,2\alpha,I} &:=& \|y_{\min{I}}\| + \|y^\prime_{\min{I}}\| + \ltn (y,y^\prime) \rtn_{x,2\alpha,I},\qquad \text{where} \\
	\ltn (y,y^\prime) \rtn_{x,2\alpha,I} &:=& \ltn y^\prime \rtn_{\alpha,I} +   \ltn R^y\rtn_{2\alpha,I^2},
\end{eqnarray*}
For a fixed rough path $\bx = (x,\X)$ and any controlled rough path $(y,y^\prime) \in \cD^{2\alpha}_x (I)$, it is proved in \cite{gubinelli} using the sewing lemma that the integral $\int_s^t y_u dx_u$ can be defined as  
\[
\int_s^t y_u dx_u := \lim \limits_{|\Pi| \to 0} \sum_{[u,v] \in \Pi} \Big( y_{u}  \otimes  x_{u,v} + y^\prime_u   \X_{u,v} \Big)
\]
where the limit is taken on all the finite partition $\Pi$ of $I$ with $|\Pi| := \displaystyle\max_{[u,v]\in \Pi} |v-u|$. Moreover, there exists a constant $C_\alpha = C_{\alpha,|I|} >1$, such that
\begin{eqnarray}\label{roughEst}
&& \Big\|\int_s^t y_u dx_u - y_s \otimes x_{s,t} - y^\prime_s \X_{s,t}\Big\|\notag\\
&&  \leq C_\alpha |t-s|^{3\alpha} \Big(\ltn x \rtn_{\alpha,[s,t]} \ltn R^y \rtn_{2\alpha,[s,t]^2} + \ltn y^\prime\rtn_{\alpha,[s,t]} \ltn \X \rtn_{2\alpha,[s,t]^2}\Big).
\end{eqnarray}
From now on, we sometimes simply write $\ltn x \rtn_{\alpha}$ or $\ltn \X \rtn_{2\alpha}$ without addressing the domain in $I$ or $I^2$. As proved in \cite{gubinelli}, the rough integral of controlled rough paths follows the rule of integration by parts.

In practice, we sometimes use the $p$-var norm
\begin{eqnarray*}
	\|(y,y^\prime)\|_{x,p,I} &:=& \|y_{\min{I}}\| + \|y^\prime_{\min{I}}\| + \ltn (y,y^\prime) \rtn_{x,p,I},\qquad \text{where} \\
	\ltn (y,y^\prime) \rtn_{x,p,I} &:=& \ltn y^\prime \rtn_{p{\rm -var},I} +   \ltn R^y\rtn_{\tq,I^2}.
\end{eqnarray*}
Thanks to the sewing lemma \cite{gubinelli}, we can use a similar version to \eqref{roughEst} under $p-$var norm as follows.
\begin{eqnarray}\label{roughpvar}
&& \Big\|\int_s^t y_u dx_u - y_s \otimes x_{s,t} -y^\prime_s \X_{s,t}\Big\| \notag\\
&& \leq C_p \Big(\ltn x \rtn_{\tp,[s,t]} \ltn R^y \rtn_{\tq,[s,t]^2} + \ltn y^\prime\rtn_{\tp,[s,t]} \ltn \X \rtn_{\tq,[s,t]^2}\Big),
\end{eqnarray}
with constant $C_p >1$ independent of $\bx$ and $y$. 

\subsection{Greedy sequence of stopping times}

Throughout this paper, we would need to use the concept of a greedy sequence of stopping times, as presented e.g. in \cite{cassetal}, \cite{congduchong17}, \cite{duchong19}. 
Given $\frac{1}{p}\in (\frac{1}{3},\nu)$, we construct for any fixed $\gamma \in (0,1)$ the sequence of greedy times $\{\tau_i(\gamma,I,p)\}_{i \in \N}$ w.r.t. the $p$ - var norm 
\begin{equation}\label{greedytime}
\tau_0 = \min{I},\quad \tau_{i+1}:= \inf\Big\{t>\tau_i:  \ltn \bx \rtn_{\tp, [\tau_i,t]} = \gamma \Big\}\wedge \max{I}.
\end{equation}
Denote by $N_{\gamma,I,p}(\bx):=\sup \{i \in \N: \tau_i \leq \max{I}\}$. It follows that
\begin{equation}\label{Nest}
N_{\gamma,I,p}(\bx) \leq 1 + \gamma^{-p} \ltn \bx \rtn^p_{\tp,I}.
\end{equation}
On the other hand, for $\alpha \in (\frac{1}{3},\nu)$, we also construct another greedy sequence of stopping times $\{\bar{\tau}_i(\gamma,I,\alpha)\}_{i \in \N}$ w.r.t. the $\alpha$ - H\"older norm
\begin{equation}\label{greedytimeHol}
\bar{\tau}_0 = \min{I},\quad \bar{\tau}_{i+1}:= \inf\Big\{t>\bar{\tau}_i:  (t-\bar{\tau}_i)^{1 - 2\alpha}+\ltn \bx \rtn_{\talpha, [\bar{\tau}_i,t]} = \gamma \Big\}\wedge \max{I},
\end{equation}
and assign $N_{\gamma,I,\alpha}(\bx):=\sup \{i \in \N: \bar{\tau}_i \leq \max{I}\}$. Then
\begin{equation}\label{Nholest}
N_{\gamma,I,\alpha}(\bx) \leq 1 + |I| \gamma^{-\frac{1}{\nu-\alpha}} \Big(1+\ltn \bx \rtn^{\frac{1}{\nu-\alpha}}_{\tnu,I}\Big).
\end{equation}

\section{Existence-uniqueness theorem and solution norm estimates}

In this section, we would like to prove the existence and uniqueness theorem and estimate solution norms for rough differential equation \eqref{fSDE0}, where the rough integral is understood in the sense of Gubinelli \cite{gubinelli} for controlled rough paths. In case the diffusion coefficient $g$ is linear, the proof is straight forward as presented below, which applies Schauder-Tichonorff theorem.

\begin{theorem}\label{RDElinear}
	There exists a unique solution of the rough differential equation 
	\begin{equation}\label{Existlinear1}
	dy_t =  f(y_t)dt + \Big(C y_t + g(0)\Big) d x_t,\quad \forall t\in [a,b],\ y_a \in \R^d. 
	\end{equation}
	Moreover, the supremum and $p-$variation norms of the solution are estimated as follows
	\begin{eqnarray}\label{estxlin}
	\|y\|_{\infty,[a,b]} &\leq&  \Big[\|y_a\| + M_0 N_{[a,b]}(\bx)\Big] e^{4C_f (b-a) + L N_{[a,b]}(\bx)}, \notag \\
	\ltn y,R^y \rtn_{p{\rm -var},[a,b]} &\leq& \Big[\|y_a\| + M_0  N_{[a,b]}(\bx)\Big] \times \\
	&& \times e^{4C_f(b-a) + L N_{[a,b]}(\bx)}N^{\frac{p-1}{p}}_{[a,b]}(\bx) -\|y_a\|,\notag
	\end{eqnarray}
	where $\ltn y,R^y \rtn_{\tp,[s,t]} :=\ltn y \rtn_{\tp,[s,t]} + \ltn R^y \rtn_{\tq,[s,t]^2}$, $M_0=(1+\frac{3}{2C_p}) \frac{\|g(0)\|}{\|C\|}+\frac{\|f(0)\|}{C_f}$ and $L = \log(1+ \frac{3}{2C_p})$. 
\end{theorem}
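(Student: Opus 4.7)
The plan is to prove local existence on intervals of the greedy partition \eqref{greedytime} via Schauder-Tychonoff, then concatenate the pieces and iterate the local bounds over the greedy intervals to obtain \eqref{estxlin}. A key simplification in the linear case is that a solution $(y,y')\in\cD^{2\alpha}_x([a,b])$ is forced to have Gubinelli derivative $y'_s = Cy_s + g(0)$, so only $y$ is unknown and $y'$ is determined by $y$.

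Fix $\gamma\in(0,1)$ small (to be tuned), and on each greedy interval $J_i=[\tau_i,\tau_{i+1}]$ with $\ltn\bx\rtn_{\tp,J_i}=\gamma$ introduce the Picard map
\[
\Phi(y)_t := y_{\tau_i} + \int_{\tau_i}^t f(y_s)\,ds + \int_{\tau_i}^t (Cy_s + g(0))\,dx_s, \qquad \Phi(y)'_t := C\Phi(y)_t + g(0),
\]
acting on the closed ball $B_R\subset\cD^{2\alpha}_x(J_i)$ of controlled paths starting at $(y_{\tau_i},Cy_{\tau_i}+g(0))$. Applying the rough-integral estimate \eqref{roughpvar} to the rough term and the standard Lipschitz bound to the drift, together with $\ltn y'\rtn_{\tp,J_i}\le\|C\|\ltn y\rtn_{\tp,J_i}$ and $\ltn x\rtn_{\tp,J_i},\ltn\X\rtn_{\tq,J_i}\le\gamma$, yields a bound of the form
\[
\ltn \Phi(y), R^{\Phi(y)} \rtn_{\tp,J_i} \le B(\gamma)\,\ltn y, R^y \rtn_{\tp,J_i} + A(C_g,C_f)\|y_{\tau_i}\| + D,
\]
with $B(\gamma)\to 0$ as $\gamma\to 0$. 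Choose $\gamma$ so that $B(\gamma)<\tfrac12$ and $\Phi(B_R)\subset B_R$. Since $B_R$ is bounded in $\cD^{2\alpha}_x(J_i)$, it is relatively compact in the weaker norm $\cD^{2\alpha'}_x(J_i)$ for any $\alpha'<\alpha$ by Arzelà-Ascoli, and $\Phi$ is continuous in this weaker topology; Schauder-Tychonoff then supplies a fixed point, hence a local solution on $J_i$.

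Concatenating the local solutions across the $N=N_{\gamma,[a,b],p}(\bx)$ greedy intervals produces a global solution on $[a,b]$, the matching at the $\tau_i$ being automatic because $y'=Cy+g(0)$ depends continuously on $y$. For the norm bound, the local fixed point equation can be estimated to yield a recursion of the shape
\[
\|y\|_{\infty,J_i} \le \Bigl(1+\tfrac{3}{2C_p}\Bigr)\|y_{\tau_i}\| + M_0', \qquad \ltn y, R^y\rtn_{\tp,J_i} \le \Bigl(1+\tfrac{3}{2C_p}\Bigr)\|y_{\tau_i}\| + M_0'',
\]
where the precise choice of $\gamma$ ties the constant to $C_p$ via the explicit rough-integral bound. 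Iterating over the $N$ intervals produces the factor $e^{LN}$ with $L=\log(1+3/(2C_p))$, while the extra $e^{4C_f(b-a)}$ factor comes from a Gronwall argument applied to the Lipschitz drift. Uniqueness and continuity on the initial value follow by running the same estimate on the difference $z=y-\ti y$ of two solutions, which satisfies a homogeneous linear rough equation with driving $\bx$.

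The main obstacle is tuning $\gamma$ so that exactly the constant $1+3/(2C_p)$ emerges in the local step: this requires combining \eqref{roughpvar} on $J_i$ with the linearity-induced identities for $y'$ and $R^{\Phi(y)}$ and then absorbing the $\ltn y, R^y\rtn_{\tp,J_i}$ contribution on the left-hand side at just the right threshold. Once the local recursion is pinned down with that constant, the rest of the argument is a careful but routine iteration across the greedy partition combined with Gronwall for the drift.
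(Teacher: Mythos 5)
Your overall strategy (Schauder--Tychonoff for local existence, concatenation over a greedy partition, a Gronwall argument absorbing the drift, and a difference estimate for uniqueness) is the same strategy the paper uses, and you correctly observe that in the linear case the Gubinelli derivative $y'_s=Cy_s+g(0)$ is determined by $y$. There is, however, a genuine gap in your local existence step. You set up the Picard map on the greedy intervals $J_i=[\tau_i,\tau_{i+1}]$ defined by \eqref{greedytime}, i.e.\ by the stopping rule $\ltn\bx\rtn_{\tp,J_i}=\gamma$, and assert that the resulting coefficient $B(\gamma)$ of $\ltn y,R^y\rtn_{\tp,J_i}$ tends to $0$ as $\gamma\to 0$. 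This is false: the contribution of the drift $\int_s^t f(y_u)\,du$ to $\ltn R^{\Phi(y)}\rtn_{\tq,J_i^2}$ is of order $C_f\,|J_i|\bigl(\|y_{\tau_i}\|+\ltn y,R^y\rtn_{\tp,J_i}\bigr)$, so $B(\gamma)$ carries an additive $C_f|J_i|$. The stopping rule \eqref{greedytime} places no upper bound on the lengths $|J_i|$ (for instance when $\bx$ is nearly flat over a stretch), so shrinking $\gamma$ does not make $B(\gamma)$ small and the self-map/contraction property of $\Phi$ can fail.

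The paper avoids exactly this problem by running the existence argument not on \eqref{greedytime} but on the $\alpha$-H\"older greedy sequence \eqref{greedytimeHol}, whose stopping rule $(t-\bar\tau_i)^{1-2\alpha}+\ltn\bx\rtn_{\talpha,[\bar\tau_i,t]}=\gamma$ simultaneously controls the H\"older seminorms of $(x,\X)$ \emph{and} the interval length. The drift then enters the estimate of $\ltn R^{\cM(y,y')}\rtn_{2\alpha}$ through a factor $(T-a)^{1-2\alpha}$, which the stopping rule forces to be $\leq\gamma$; this is what makes $\cM$ a self-map of the convex compact ball $\cB$. The $p$-var greedy sequence \eqref{greedytime} is then used only in the norm-estimate step, where the drift's contribution is left as an integral $\int_s^t 4C_f\ltn y,R^y\rtn_{\tp,[s,u]}\,du$ and absorbed by the continuous Gronwall lemma, producing the factor $e^{4C_f(b-a)}$. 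To repair your argument you would either need to refine your greedy partition so that each $J_i$ is also short (effectively rediscovering \eqref{greedytimeHol}), or bring the Gronwall mechanism inside the local fixed-point step rather than after it.
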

\begin{proof}
	{\bf Step 1}. Rewrite equation \eqref{RDElinear} in the integral form
	\begin{equation}\label{linRDE3}
	y_t = G(y,y^\prime)_t = y_a + \int_a^t f(y_u) du + \int_a^t \Big(C y_u +g(0) \Big)dx_u,\qquad t\in [a,T],
	\end{equation}
	Denote by $\cD^{2\alpha}_x\Big(y_a,C y_a+g(0)\Big)$  the set of paths $(y,y^\prime)$ controlled by $x$ in $[a,T]$ with fixed initial conditions $y_a$ and $y^\prime_a = Cy_a + g(0)$. Consider the mapping defined by
	\begin{eqnarray*}
	\cM: \cD^{2\alpha}_x\Big(y_a,C y_a+g(0)\Big) &\to& \cD^{2\alpha}_x\Big(y_a,Cy_a+g(0)\Big),\\
	\cM (y,y^\prime)_t &:=& \Big(G(y,y^\prime)_t, Cy_t+g(0)\Big). 
	\end{eqnarray*}
	Similar to \cite{gubinelli} we are going to estimate $\ltn \cM (y,y^\prime)\rtn_{x,2\alpha} = \ltn C y +g(0)\rtn_\alpha + \ltn R^{G(y,y^\prime)} \rtn_{2\alpha}$ using $\ltn(y,y^\prime)\rtn_{x,2\alpha} = \ltn y^\prime \rtn_\alpha + \ltn R^y \rtn_{2\alpha}$. Observe that
	\begin{eqnarray*}
		\ltn C y +g(0)\rtn_\alpha &\leq& \|C\| \ltn y \rtn_\alpha \leq  \|C\| \Big(\|y^\prime\|_{\infty} \ltn x\rtn_\alpha + (T-a)^\alpha \ltn R^y \rtn_{2\alpha}\Big)\\
		&\leq& \|C\| \ltn x \rtn_\alpha \|y^\prime_a\| + \|C\|(T-a)^\alpha \ltn x \rtn_\alpha \ltn y^\prime \rtn_\alpha \\
		&& + \|C\| (T-a)^\alpha \ltn R^y \rtn_{2\alpha},
	\end{eqnarray*}
	and
	\begin{eqnarray*}
		\|R^{G(y,y^\prime)}_{s,t}\| &\leq&  \Big\|\int_s^t f(y_u) du\Big\| + \Big\|\int_s^t C y_u dx_u - Cy_s \otimes x_{s,t}\Big\| \\
		&\leq&  C_f|t-s| \|y\|_{\infty} + \|f(0)\|(t-s)+ \|C\| \|y^\prime\|_{\infty} |\X_{s,t}| \\
		&&+ C_\alpha |t-s|^{3\alpha} \Big[\ltn x \rtn_{\alpha} \|C\| \ltn R^y \rtn_{2\alpha} + \|C\| \ltn y^\prime \rtn_{\alpha} \ltn \X \rtn_{2\alpha} \Big],
	\end{eqnarray*}
	where we can choose $T-a <1$ so that $C_\alpha$ can be bounded from above by $C_{\alpha,1}$. Since 
	\[
	\|y\|_{\infty,[s,t]} \leq \|y_a\| + \|y^\prime_a\| (T-a)^{\alpha} \ltn x \rtn_{\alpha} + (T-a)^{2 \alpha} \ltn R^y \rtn_{2 \alpha},
	\]
	it follows that
	\begin{eqnarray*}
		&&\ltn R^{G(y,y^\prime)} \rtn_{2\alpha} \\
		&\leq& (T-a)^{1-2\alpha}( C_f \|y_a\|+\|f(0)\|) + (T-a)^{1-\alpha} C_f \ltn x \rtn_\alpha \|y_a^\prime\| \\
		&&+ C_f(T-a)\ltn R^y \rtn_{2\alpha} + \|C\| \ltn \X \rtn_{2\alpha} (|y_a^\prime|+ (T-a)^\alpha \ltn y^\prime \rtn_\alpha)\\
		&&+ C_\alpha \|C\| (T-a)^\alpha \Big[\ltn x \rtn_{\alpha} \ltn R^y \rtn_{2\alpha} + \ltn y^\prime \rtn_{\alpha} \ltn \X \rtn_{2\alpha} \Big] 
	\end{eqnarray*}
	Hence, we can estimate $\ltn \cM (y,y^\prime)\rtn_{x,2\alpha}$ as follows
	\allowdisplaybreaks
	\begin{eqnarray*}
		&&\ltn \cM(y,y^\prime) \rtn_{x, 2\alpha} \\
		&\leq& \|C\|\Big[(\|y^\prime_a\| + (T-a)^\alpha \ltn y^\prime \rtn_\alpha)\ltn x \rtn_\alpha + (T-a)^\alpha \ltn R^y \rtn_{2\alpha} \Big] + \ltn R^{G(y,y^\prime)} \rtn_{2\alpha}\\
		&\leq& (T-a)^{1-2\alpha}(C_f \|y_a\|  +\|f(0)\|)\\
		&&+ \Big[\Big(\|C\| + (T-a)^{1-\alpha} C_f \Big) \ltn x \rtn_{\alpha}+ \|C\| \ltn \X \rtn_{2\alpha} \Big] \|C y_a+g(0)\| \\
		&&+ \Big[(T-a)^\alpha \|C\| \ltn x \rtn_\alpha + (T-a)^\alpha \|C\| (1+ C_\alpha) \ltn \X \rtn_{2\alpha} \Big] \ltn y^\prime \rtn_\alpha \\
		&&+ \Big[\|C\| (T-a)^\alpha + (T-a)C_f + C_\alpha \|C\| (T-a)^\alpha \ltn x \rtn_\alpha \Big] \ltn R^y \rtn_{2 \alpha}\\
		&\leq& \Big(C_f + \|C\| + \|C\| C_\alpha\Big) (1 + \|C\|) \frac{\mu}{2M} \Big(\|y_a\| + \frac{\|g(0)\|}{\|C\|}+\frac{\|f(0)\|}{C_f}\Big) \\
		&& + \Big[C_f + \|C\| + \|C\| C_\alpha \Big] \frac{\mu}{2M} \Big(\ltn y^\prime \rtn_\alpha + \ltn R^y \rtn_{2\alpha}\Big)\\
		&\leq& \mu \Big(\|y_a\|+\frac{\|g(0)\|}{\|C\|}+\frac{\|f(0)\|}{C_f}+\ltn (y,y^\prime)\rtn_{x,2\alpha}\Big)
	\end{eqnarray*}
	where we choose for a fixed number $\mu \in (0,1)$ with
	\[
	M := \max\Big\{ \Big[C_f + \|C\|(1 +C_\alpha)\Big](1+\|C\|), \frac{1}{2}\Big\}
	\]
	and $T = T(a)$ satisfying
	\begin{equation*}
	(T-a)^{1-2\alpha} + \ltn x \rtn_{\alpha, [a,T]} + \ltn \X \rtn_{2\alpha, [a,T]^2}^{\frac{1}{2}} = \frac{\mu}{2M}<1. 
	\end{equation*}
	As a result, if we restrict to the convex compact set
	\[
	\cB := \Big\{ (y,y^\prime) \in \cD^{2\alpha}_x(y_a,C y_a), \ltn (y,y^\prime)\rtn_{x,2\alpha} \leq \frac{\mu}{1- \mu} \Big(\|y_a\| + \frac{\|g(0)\|}{\|C\|}+\frac{\|f(0)\|}{C_f}\Big) \Big\}
	\]
	then
	\begin{eqnarray*}
		\ltn \cM(y,y^\prime) \rtn_{x, 2\alpha} &\leq &\mu \Big(\|y_a\|+\frac{\|g(0)\|}{\|C\|}+\frac{\|f(0)\|}{C_f}\Big) + \mu \|(y,y^\prime)\|_{x,2\alpha} \\
		&\leq&\Big(\frac{\mu^2}{1- \mu}+\mu\Big)\Big(\|y_a\|+\frac{\|g(0)\|}{\|C\|}+\frac{\|f(0)\|}{C_f}\Big) \\
		&\leq& \frac{\mu}{1-\mu}\Big(\|y_a\|+\frac{\|g(0)\|}{\|C\|}+\frac{\|f(0)\|}{C_f}\Big),
	\end{eqnarray*}
	which proves that $\cM: \cB \to \cB$. By Schauder-Tichonorff theorem, there exists a  fixed point of $\cM$ which is a solution of equation \eqref{Existlinear1} on the interval $[a,T]$. 
	
	Next, for any two solutions $(y,y^\prime), (\bar{y},\bar{y}^\prime)$ of the same initial conditions $(y_a, Cy_a)$, by similar computations, we obtain
	\begin{eqnarray*}
		\ltn (y,y^\prime) - (\bar{y},\bar{y}^\prime)\rtn_{x,2\alpha}
		&\leq& \mu \Big(\|y_a-\bar{y}_a\| + \ltn (y,y^\prime) - (\bar{y},\bar{y}^\prime) \rtn_{x,2\alpha}\Big)\\ &\leq&  \mu \ltn (y,y^\prime)-(\bar{y},\bar{y}^\prime)\rtn_{x,2\alpha}, 
	\end{eqnarray*}
	which, together with $\mu <1$, proves the uniqueness of solution of \eqref{Existlinear1} on $[a,T]$. By constructing the greedy sequence of stopping times $\{\bar{\tau}_i(\frac{\mu}{2M},I,\alpha)\}_{i \in \N}$ as in \eqref{greedytimeHol}, we can extend and prove the existence of the unique solution on the interval $[a,b]$. 
	
	{\bf Step 2}. To estimate the norms, note that
	\begin{eqnarray*}
		y^\prime_s = Cy_s + g(0),\quad [Cy + g(0)]_s^\prime = C^2 y_s + C g(0),\quad R_{s,t}^{Cy+g(0)} = C R_{s,t}^y. 
	\end{eqnarray*}
	The estimate $\|y_{s,t}\|$ is direct using \eqref{roughpvar}, namely
	\begin{eqnarray}\label{linest1}
	\|y_{s,t}\| &\leq& \int_s^t (C_f \|y_u\| + \|f(0)\|) du \notag\\
	&& + (\|C\| \|y_s\| + \|g(0)\|) (\|x_{s,t}\|+ \|C\|\|\X_{s,t}\|) \notag\\
	&& + C_p \Big(\ltn x\rtn_{\tp,[s,t]} \ltn R^{Cy+g(0)}\rtn_{\tq,[s,t]^2} \notag\\
	&& + \ltn \X \rtn_{\tq,[s,t]^2} \ltn [Cy + g(0)]^\prime \rtn_{\tp,[s,t]} \Big) \notag\\
	&\leq& \int_s^t (C_f \|y\|_{\tp,[s,u]} + \|f(0)\|) du \notag\\
	&& + \Big(\|y_s\| + \frac{\|g(0)\|}{\|C\|}\Big) (\|C\|\|x_{s,t}\|+\|C\|^2 \|\X_{s,t}\|) \notag\\
	&& + C_p \Big(\ltn x\rtn_{\tp,[s,t]} \|C\| \ltn R^{y}\rtn_{\tq,[s,t]^2} \notag\\
	&&+ \ltn \X \rtn_{\tq,[s,t]^2} \|C\|^2 \ltn y^\prime \rtn_{\tp,[s,t]} \Big) \notag\\
	&\leq& \int_s^t (C_f \|y\|_{\tp,[s,u]} + \|f(0)\|) du \notag\\
	&&+ 2\Big(\|y_s\| + \frac{\|g(0)\|}{\|C\|}\Big) (\|C\| \|x_{s,t}\| \vee \|C\|^2\|\X_{s,t}\|) \notag\\
	&& + C_p \Big\{\|C\|^2\ltn \X \rtn_{\tq,[s,t]^2} \vee \|C\| \ltn x \rtn_{\tp,[s,t]} \Big\} \ltn y,R^y \rtn_{\tp,[s,t]}.
	\end{eqnarray}
	The estimate for $\|R^y_{s,t}\|$ is already included in \eqref{linest1}. It follows that
	\begin{eqnarray*}
	&&	\ltn y,R^y \rtn_{\tp,[s,t]} \\
		&\leq& 2\int_s^t (C_f \ltn y\rtn_{\tp,[s,u]} +C_f\|y_s\|+ \|f(0)\|)du \\
		&& + 3\Big(\|C\| \ltn x \rtn_{\tp,[s,t]}  \vee \|C\|^2 \ltn \X \rtn_{\tq,[s,t]^2} \Big) \Big(\|y_s\| +\frac{\|g(0)\|}{\|C\|} \Big) \\
		&&+2C_p \Big\{\|C\|^2\ltn \X \rtn_{\tq,[s,t]^2} \vee \|C\| \ltn x \rtn_{\tp,[s,t]} \Big\} \ltn y,R^y \rtn_{\tp,[s,t]}.
	\end{eqnarray*}
	As a result
	\begin{eqnarray*}
	\ltn y,R^y \rtn_{p{\rm -var},[s,t]} &\leq& \int_s^t 4 C_f \ltn y,R^y \rtn_{p{\rm -var},[s,u]} du \\
	&&+ 4(\|f(0)\|+ C_f \|y_s\|)(t-s) +\frac{3}{2C_p} \Big(\|y_s\| + \frac{\|g(0)\|}{\|C\|}\Big)
	\end{eqnarray*}
	whenever $2C_p \|C\| \ltn \bx \rtn_{\tp,[s,t]} \leq \frac{1}{2}$. Applying the continuous Gronwall lemma, we obtain
	\allowdisplaybreaks
	\begin{eqnarray}\label{yqvarest}
	\ltn y,R^y \rtn_{p{\rm -var},[s,t]} &\leq&  4(\|f(0)\|+ C_f \|y_s\|)(t-s) + \frac{3}{2C_p} \frac{\|g(0)\|}{\|C\|}+ \frac{3}{2C_p} \|y_s\| \notag\\
	&&+ \int_s^t 4C_f e^{4C_f (t-u)} \Big[4\Big(\|f(0)\|+ C_f \|y_s\|\Big)(u-s) \notag\\
	&&+ \frac{3}{2C_p} \frac{\|g(0)\|}{\|C\|} 
	+ \frac{3}{2C_p} \|y_s\|\Big] du \notag\\
	&\leq& \Big(M_0 + e^L\|y_s\| \Big)e^{4C_f(t-s)} - \|y_s\|
	\end{eqnarray}
	%%%%%%%%%%5
	whenever $4C_p \|C\| \ltn \bx \rtn_{\tp,[s,t]} \leq 1$. By constructing the greedy sequence of stopping times $\{\tau_i(\frac{1}{4C_p\|C\|},[a,b],p) \}$ and use similar estimates to the proof of \cite[Theorem 2.4]{duchong19}, we obtain \eqref{estxlin}.
	
\end{proof}

\begin{corollary}
	The solution of \eqref{Existlinear1} is uniformly continuous in $y_a$, i.e. for any two solutions $y_\cdot(\bx,y_a)$ and $\ty_\cdot(\bx,\ty_a)$, the following estimates hold 
	\begin{eqnarray}\label{estxlin2}
	\|\ty-y\|_{\infty,[a,b]} &\leq&  \|\ty_a-y_a\| e^{4C_f (b-a) + L N_{[a,b]}(\bx)}, \notag \\
	\ltn \ty-y,R^{\ty-y} \rtn_{p{\rm -var},[a,b]} &\leq& \|\ty_a-y_a\| e^{4C_f(b-a) + L N_{[a,b]}(\bx)}N^{\frac{p-1}{p}}_{[a,b]}(\bx) \\
	&& -\|\ty_a-y_a\|.\notag
	\end{eqnarray}
\end{corollary}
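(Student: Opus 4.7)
The strategy is to reduce the problem to a homogeneous version of Theorem \ref{RDElinear} applied to the difference process. Set $z_t := \ty_t - y_t$; subtracting the integral forms of the two solutions of \eqref{Existlinear1} gives
\[
z_t \;=\; z_a + \int_a^t \bigl[ f(\ty_u) - f(y_u) \bigr]\, du + \int_a^t C z_u\, dx_u, \qquad t \in [a,b].
\]
Thus $(z, z')$ with $z'_u := C z_u$ is controlled by $x$, with $R^z_{s,t} = z_{s,t} - C z_s\, x_{s,t}$ and $z'_a = C z_a$. Two features of this equation make it easier than \eqref{Existlinear1}: the drift $h(u) := f(\ty_u) - f(y_u)$ obeys the purely Lipschitz bound $\|h(u)\| \le C_f \|z_u\|$ with no additive constant, and the diffusion $C z_u$ has no analogue of $g(0)$. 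In the language of Theorem \ref{RDElinear} this is the degenerate case $\|f(0)\| = \|g(0)\| = 0$, so the constant $M_0$ vanishes.

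I would then rerun Step 2 of the proof of Theorem \ref{RDElinear} verbatim on $z$: every inequality from \eqref{linest1} through \eqref{yqvarest} survives with $y_s$ replaced by $z_s$, $\|f(0)\|$ by $0$ and $\|g(0)\|$ by $0$. The continuous Gronwall lemma then produces, on any sub-interval $[s,t]$ satisfying $4 C_p \|C\| \ltn \bx \rtn_{\tp,[s,t]} \le 1$, the homogeneous estimate
\[
\ltn z, R^z \rtn_{\tp,[s,t]} \;\le\; e^L \|z_s\|\, e^{4 C_f (t-s)} - \|z_s\|,
\]
from which the single-step supremum bound $\|z_{\tau_{i+1}}\| \le e^L e^{4 C_f(\tau_{i+1}-\tau_i)} \|z_{\tau_i}\|$ follows by the triangle inequality applied to $z_{\tau_{i+1},\tau_i}$.

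To finish, I iterate along the same greedy sequence $\{\tau_i(\tfrac{1}{4 C_p \|C\|}, [a,b], \tp)\}_{i=0}^{N_{[a,b]}(\bx)}$ used in the proof of Theorem \ref{RDElinear}. A product of $N_{[a,b]}(\bx)$ factors of $e^L e^{4 C_f \Delta \tau_i}$ telescopes to $e^{4 C_f (b-a) + L N_{[a,b]}(\bx)}$, which yields the supremum estimate of \eqref{estxlin2}; assembling the local $\tp$-variation bounds block by block exactly as in Step 2 of Theorem \ref{RDElinear} (cf.\ \cite[Theorem 2.4]{duchong19}) introduces the characteristic factor $N^{(p-1)/p}_{[a,b]}(\bx)$ and gives the second inequality. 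The main obstacle is almost purely bookkeeping: one must verify that every appearance of the constants $\|f(0)\|$ and $\|g(0)\|$ in Step 2 of Theorem \ref{RDElinear} arises solely from bounds of the form $\|f(y_u)\| \le C_f \|y_u\| + \|f(0)\|$ and $\|C y_u + g(0)\| \le \|C\|\|y_u\| + \|g(0)\|$, so that substituting the homogeneous bounds $\|h(u)\| \le C_f \|z_u\|$ and $\|C z_u\| \le \|C\|\|z_u\|$ is legitimate and simply zeroes out those terms without affecting the rest of the Gronwall-plus-concatenation machinery.
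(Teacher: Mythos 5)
Your proposal is correct and is exactly the intended argument: the paper explicitly says the proof ``follows similar arguments and will be omitted,'' and what it means is precisely your reduction. Subtracting the two integral forms shows $z=\ty-y$ solves the homogeneous equation $dz_t=[f(\ty_t)-f(y_t)]\,dt + Cz_t\,dx_t$, the drift obeys $\|f(\ty_u)-f(y_u)\|\le C_f\|z_u\|$ and the diffusion has no affine term, so Step~2 of Theorem~\ref{RDElinear} applies verbatim with $\|f(0)\|=\|g(0)\|=0$; this kills $M_0$, Gronwall yields $\ltn z,R^z\rtn_{\tp,[s,t]}\le e^L\|z_s\|e^{4C_f(t-s)}-\|z_s\|$ on blocks of the same greedy sequence $\{\tau_i(\tfrac{1}{4C_p\|C\|},[a,b],p)\}$, and the block-by-block concatenation with the H\"older factor $N^{(p-1)/p}_{[a,b]}(\bx)$ gives \eqref{estxlin2}.
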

\begin{proof}
	The proof follows similar arguments and will be omitted. 	
\end{proof}

Unlike the linear case, the proof for the nonlinear case $g \in C^3_b$ is not simple by just following the scheme in the linear case. This is because the estimate of the rough integral $\int_s^t g(y_u)dx_u$ depends on the H\"older seminorm $\ltn [g(y)]^\prime\rtn_{\alpha}$, which would be estimated by a non-linear term $\ltn y \rtn_{\alpha} \|y^\prime\|_\infty$. As a result, if one would attempt to apply the Schauder-Tichonorff theorem directly, then the possible greedy sequence of stopping times constructed in Theorem \ref{RDElinear} would depend on the initial value $\|y_a\|$ and it might not be easy to extend the solution to any interval. To overcome this difficulty, we propose another scheme, by first proving the differentiability w.r.t. the initial condition of the solution of the rough differential equation
\begin{equation}\label{RDEpart}
dy_t = g(y_t) dx_t, \quad \forall t \in [a,b], y_a \in \R^d,
\end{equation}
and then applying the Doss-Sussmann technique \cite{Sus78}, \cite{ImkSchm01} to transform the original system to an ordinary differential equation. Note that the existence, uniqueness and continuity of the solution of \eqref{RDEpart} is already provided in \cite{gubinelli}, but the differentiability of the solution $y_t(\bx,y_a)$ w.r.t. $y_a$ is somehow missing due to the technical complexity. We will provide a detailed proof for this assertment. First, we need the following estimates.

\begin{proposition}\label{prop1}
	The solution $y_t(\bx,y_a)$ of \eqref{RDEpart} is uniformly continuous w.r.t. $y_a$, i.e. for any two solutions $y_\cdot(\bx,y_a)$ and $\ty_\cdot(\bx,\ty_a)$ the following estimates hold
	\begin{eqnarray}\label{ydiff}
	\|\ty -y \|_{\infty,[a,b]} &\leq& \|\ty_a -y_a\|e^{(\log 2) \bar{N}_{[a,b]}(\bx)},  \notag\\
	\ltn \ty-y,R^{\ty-y} \rtn_{\tp,[a,b]} &\leq& \|\ty_a -y_a\| \bar{N}^{\frac{p-1}{p}}_{[a,b]}(\bx)e^{(\log 2) \bar{N}_{[a,b]}(\bx)} \\
	&&- \|\ty_a -y_a\|,\notag
	\end{eqnarray}
	where $\bar{N}_{[a,b]}(\bx)$ is the maximal index of the maximal stopping time in the greedy sequence 
	\begin{equation}\label{greedybarN}
	\tau_0 = a, \tau_{k+1} := \inf \Big\{t > \tau_k: 8C_pC_g C(\bx,[a,b])\ltn \bx \rtn_{\tp,[\tau_k,t]} =1 \Big\} \wedge b,
	\end{equation}
	that lies in the interval $[a,b]$, with 
	\[
	C(\bx,[a,b]):= \Big\{1 + \frac{1}{C_p} 2^{\frac{2p-1}{p}} \Big(1+ [8C_pC_g]^{2p-1} \ltn \bx \rtn_{\tp,[a,b]}^{2p-1}\Big)\Big\}.
	\]	
\end{proposition}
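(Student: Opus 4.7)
The plan is to view the difference $z := \ty - y$ as a path controlled by $x$, derive a contractive local estimate on each interval of the greedy partition \eqref{greedybarN}, and then iterate to obtain the global bound.

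\textbf{Setup and a priori bounds.} Since $y, \ty$ both solve \eqref{RDEpart}, their Gubinelli derivatives are $y'_s = g(y_s)$ and $\ty'_s = g(\ty_s)$. Subtracting the controlled expansions gives
\[
z_{s,t} = [g(\ty_s) - g(y_s)] x_{s,t} + R^z_{s,t}, \qquad R^z_{s,t} := R^{\ty}_{s,t} - R^y_{s,t},
\]
so $(z, g(\ty)-g(y)) \in \cD^{2\alpha}_x$. Before handling the difference, I would first apply \eqref{roughpvar} together with $g \in C^3_b$ to the standalone equations in order to derive a priori bounds of the schematic form $\ltn y, R^y\rtn_{\tp,[a,b]} + \ltn \ty, R^{\ty}\rtn_{\tp,[a,b]} \lesssim C(\bx,[a,b])$; this explains the role of that constant in the statement.

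\textbf{Local contraction step.} The key local computation is to apply the rough integral estimate \eqref{roughpvar} to
\[
z_{\tau_k, t} = \int_{\tau_k}^t [g(\ty_u) - g(y_u)] dx_u
\]
on $[\tau_k, \tau_{k+1}]$. The Gubinelli derivative of $g(\ty)-g(y)$ is $Dg(\ty)g(\ty) - Dg(y)g(y)$. Using $g\in C^3_b$ together with Taylor expansions up to second order, one estimates $\ltn g(\ty)-g(y)\rtn_{\tp}$ and $\ltn R^{g(\ty)-g(y)}\rtn_{\tq}$ linearly in terms of $\|z\|_{\infty}$ and $\ltn z, R^z\rtn_{\tp}$, with coefficients controlled by $C_g \cdot C(\bx,[a,b])$. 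Plugging into \eqref{roughpvar} yields an inequality of the form
\[
\ltn z, R^z\rtn_{\tp,[\tau_k,\tau_{k+1}]} \leq A_k \|z_{\tau_k}\| + 4 C_p C_g C(\bx,[a,b]) \ltn\bx\rtn_{\tp,[\tau_k,\tau_{k+1}]} \ltn z, R^z\rtn_{\tp,[\tau_k,\tau_{k+1}]}.
\]
The defining property $8 C_p C_g C(\bx,[a,b])\ltn\bx\rtn_{\tp,[\tau_k,\tau_{k+1}]} = 1$ of the greedy sequence forces the factor in front of $\ltn z, R^z\rtn_{\tp,[\tau_k,\tau_{k+1}]}$ to be at most $\tfrac{1}{2}$, allowing absorption to the left and yielding $\ltn z, R^z\rtn_{\tp,[\tau_k,\tau_{k+1}]} \leq \|z_{\tau_k}\|$ after tracking the constants carefully.

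\textbf{Global iteration.} From the local bound, $\|z_{\tau_{k+1}}\| \leq \|z_{\tau_k}\| + \ltn z\rtn_{\tp,[\tau_k,\tau_{k+1}]} \leq 2\|z_{\tau_k}\|$, so by induction $\|z_{\tau_k}\| \leq 2^k \|z_a\| = e^{k\log 2}\|z_a\|$, proving the supremum estimate in \eqref{ydiff}. For the $p$-variation bound, sum over greedy intervals using the elementary inequality
\[
\ltn z, R^z\rtn_{\tp,[a,b]}^p \leq \bar{N}_{[a,b]}(\bx)^{p-1} \sum_{k=0}^{\bar{N}_{[a,b]}(\bx)-1} \ltn z, R^z\rtn_{\tp,[\tau_k,\tau_{k+1}]}^p,
\]
combine with the per-interval bound $\ltn z, R^z\rtn_{\tp,[\tau_k,\tau_{k+1}]} \leq \|z_{\tau_k}\| \leq 2^k\|z_a\|$, and subtract off $\|z_a\|$ to obtain the pure semi-norm form, proceeding exactly as in the final step of the proof of Theorem \ref{RDElinear}.

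\textbf{Main obstacle.} The hardest technical point is the composed estimate of $[g(\ty)-g(y)]'$ and its remainder $R^{g(\ty)-g(y)}$. This requires a careful two-term Taylor expansion of both $g$ and $Dg$ together with algebraic manipulations that separate cross-terms involving $y, \ty, R^y, R^{\ty}, z$ and $R^z$. This is precisely where the $C^3_b$ assumption is essential and where $C(\bx,[a,b])$ emerges as the product of multiplicative factors bounding the individual controlled rough path norms of $y$ and $\ty$.
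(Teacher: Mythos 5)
Your proposal is correct and follows essentially the same route as the paper: first establish a priori $p$-variation bounds for each standalone solution (which is exactly where the constant $C(\bx,[a,b])$ arises as $1 + \ltn y,R^y\rtn_{\tp,[a,b]} + \ltn \ty,R^{\ty}\rtn_{\tp,[a,b]}$), then perform a second-order Taylor expansion of $g$ and $Dg$ to control the Gubinelli derivative $Dg(\ty)g(\ty)-Dg(y)g(y)$ and the remainder $R^{g(\ty)-g(y)}$, absorb via the greedy-time condition $8C_pC_gC(\bx,[a,b])\ltn\bx\rtn_{\tp,[\tau_k,\tau_{k+1}]}\le 1$, and iterate over the greedy partition to get the doubling bound $\|z_{\tau_k}\|\le 2^k\|z_a\|$ and the $p$-variation estimate. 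This matches the paper's Step 1--Step 2 structure and the concatenation argument from the proof of Theorem~\ref{RDElinear}.
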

\begin{proof}
	The proof is divided into several steps.\\
	
	{\bf Step 1:} To estimate the solution norms of \eqref{RDEpart}, observe that due to \eqref{roughEst}
	\begin{eqnarray*}
	&& \Big\|\int_s^t g(y_u) dx_u - g(y_s) \otimes x_{s,t} - [g(y)]^\prime_s  \X_{s,t} \Big\| \\
	&\leq& C_\alpha (t-s)^{3\alpha}\Big[\ltn x \rtn_{\alpha,[s,t]} \ltn R^{g(y)} \rtn_{2\alpha,[s,t]^2} + \ltn \X \rtn_{2\alpha,[s,t]^2} \ltn [g(y)]^\prime \rtn_{\alpha,[s,t]}\Big].  
	\end{eqnarray*}
	It follows that $y$ is controlled by $x$ with $y^\prime = g(y)$. Since
	\begin{eqnarray*}
		g(y_t) - g(y_s)
		&=& \int_0^1 Dg(y_s + \eta y_{s,t}) y_{s,t}d\eta \\
		&=& D_g(y_s) y^\prime_s \otimes  x_{s,t} + \int_0^1 Dg(y_s + \eta y_{s,t}) R^y_{s,t}d\eta \\
		&& + \int_0^1 [Dg(y_s + \eta y_{s,t}) - Dg(y_s)] y^\prime_{s,t} \otimes x_{s,t}d\eta,
	\end{eqnarray*}
	it easy to show that $[g(y)]^\prime_s = Dg(y_s)g(y_s)$, where we use \eqref{gcond} to estimate
	\begin{eqnarray*}
		\|R^{g(y)}_{s,t}\| &\leq& \int_0^1 \|Dg(y_s + \eta y_{s,t})\| \|R^y_{s,t}\|d\eta \\
		&& + \int_0^1 \|Dg(y_s + \eta y_{s,t}) - Dg(y_s)\| \|g(y_s)\| \|x_{s,t}\|d\eta \\
		&\leq& C_g \|R^y_{s,t}\| + \frac{1}{2} C_g^2 \|y_{s,t}\| \|x_{s,t}\|.
	\end{eqnarray*}
	This together with H\"older inequality yields
	\allowdisplaybreaks
	\begin{eqnarray}\label{Exist2}
	\ltn [g(y)]^\prime \rtn_{\tp,[s,t]} &\leq& 2 C_g^2 \ltn y \rtn_{\tp,[s,t]}, \quad \|[g(y)]^\prime\|_{\infty,[s,t]} \leq C_g^2, \notag\\
	\ltn R^{g(y)} \rtn_{\tq,[s,t]^2} &\leq& C_g \ltn R^y \rtn_{\tq,[s,t]^2} \\
	&& + \frac{1}{2}C_g^2 \ltn x \rtn_{\tp,[s,t]} \ltn y \rtn_{\tp,[s,t]}.\notag
	\end{eqnarray}
	As a result, by introducing $\ltn y,R^y \rtn_{\tp,[s,t]} :=\ltn y \rtn_{\tp,[s,t]} + \ltn R^y \rtn_{\tq,[s,t]^2}$, we obtain
	\begin{eqnarray*}
		&&\|y_{s,t}\| \\
		&\leq& \Big\| \int_s^t g(y_u) dx_u\Big\| \\
		&\leq& \|g(y_s)\| \|x_{s,t}\| + \|Dg(y_s)g(y_s)\| \|\X_{s,t}\| \\
		&&+ C_p \Big[ \ltn x \rtn_{\tp,[s,t]} \ltn R^{g(y)}\rtn_{\tq,[s,t]^2} + \ltn \X \rtn_{\tq,[s,t]^2} \ltn [g(y)]^\prime \rtn_{\tp,[s,t]}\Big]  \\
		&\leq& C_g \ltn x \rtn_{\tp,[s,t]} + C_g^2 \ltn \X \rtn_{\tq,[s,t]^2} + C_p \Big\{C_g \ltn x \rtn_{\tp,[s,t]} \ltn R^y \rtn_{\tq,[s,t]^2} \\
		&&+ \Big[\frac{1}{2} \ltn x\rtn_{\tp,[s,t]}^2 + 2 \ltn \X \rtn_{\tq,[s,t]^2}\Big] C_g^2 \ltn y \rtn_{\tp,[s,t]} \Big \} \\
		&\leq& C_g \ltn \bx \rtn_{\tp,[s,t]} + C_g^2 \ltn \bx \rtn_{\tp,[s,t]}^2 \\
		&&+ 2C_p \Big\{C_g \ltn \bx \rtn_{\tp,[s,t]} \vee  C_g^2 \ltn \bx \rtn_{\tp,[s,t]}^2  \Big\} \ltn y, R^y\rtn_{\tp,[s,t]},\\
		&\leq& 2\Big\{C_g \ltn \bx \rtn_{\tp,[s,t]} \vee  C_g^2 \ltn \bx \rtn_{\tp,[s,t]}^2  \Big\} \Big(1 + C_p\ltn y, R^y\rtn_{\tp,[s,t]}\Big),
	\end{eqnarray*}
	which derives
	\begin{eqnarray}\label{ypvar} 
	\ltn y \rtn_{\tp,[s,t]} &\leq& 2\Big\{C_g \ltn \bx \rtn_{\tp,[s,t]} \vee  C_g^2 \ltn \bx \rtn_{\tp,[s,t]}^2  \Big\} \times \notag\\
	&& \times \Big(1 + C_p\ltn y, R^y\rtn_{\tp,[s,t]}\Big).
	\end{eqnarray}
	The same estimate for $R^y$ is actually included in the above estimate, hence
	\begin{eqnarray}\label{Rqvar}
	\ltn R^y \rtn_{\tq,[s,t]^2} &\leq& 2\Big\{C_g \ltn \bx \rtn_{\tp,[s,t]} \vee  C_g^2 \ltn \bx \rtn_{\tp,[s,t]}^2  \Big\} \times\notag\\
	&& \times \Big(1 + C_p\ltn y, R^y\rtn_{\tp,[s,t]}\Big).
	\end{eqnarray}
	Combining \eqref{ypvar} and \eqref{Rqvar} gives
	\begin{eqnarray}\label{yRpvar}
	\ltn y,R^y \rtn_{\tp,[s,t]} &\leq& 4\Big\{C_g \ltn \bx \rtn_{\tp,[s,t]} \vee  C_g^2 \ltn \bx \rtn_{\tp,[s,t]}^2  \Big\} \times \notag\\
	&& \times \Big(1 + C_p\ltn y, R^y\rtn_{\tp,[s,t]}\Big).
	\end{eqnarray}
	It implies from \eqref{yRpvar} that 
	\begin{eqnarray*}
	&& \ltn y,R^y \rtn_{\tp,[s,t]} \leq \frac{1}{C_p}\\
	&\text{whenever}& \Big\{C_g \ltn \bx \rtn_{\tp,[s,t]} \vee  C_g^2 \ltn \bx \rtn_{\tp,[s,t]}^2  \Big\} \leq \frac{1}{8C_p} <1, 
	\end{eqnarray*}
	which yields
	\[
	\ltn y,R^y \rtn_{\tp,[s,t]} \leq \frac{1}{C_p} \quad \text{whenever} \quad \ltn \bx \rtn_{\tp,[s,t]} \leq \frac{1}{8C_pC_g}.
	\]
	Using similar arguments to \cite[Theorem 2.4]{duchong19}, by constructing a greedy sequence of stopping times $\{\tau_i(\frac{1}{8C_pC_g},I,p)\}_{i \in \N}$ as in \eqref{greedytime}, we conclude that
	\begin{eqnarray}
	\|y\|_{\infty,[a,b]} &\leq& \|y_a\| + \frac{1}{C_p}N_{[a,b]}(\bx) \notag\\
	&\leq& \|y_a\| + \frac{1}{C_p} \Big(1+[8C_pC_g]^p \ltn \bx \rtn_{\tp,[a,b]}^p\Big), \label{ysupest} \\
	\ltn y,R^y \rtn_{p{\rm - var},[a,b]} &\leq& N^{\frac{p-1}{p}}_{[a,b]}(\bx)\sum_{k = 0}^{N_{[a,b]}(\bx)-1} \ltn y,R^y \rtn_{q{\rm -var},[\tau_k,\tau_{k+1}]}\notag\\
	&\leq& \frac{1}{C_p}N^{\frac{2p-1}{p}}_{[a,b]}(\bx), \notag\\
	&\leq& \frac{1}{C_p} 2^{\frac{p-1}{p}} \Big(1+ [8C_pC_g]^{2p-1} \ltn \bx \rtn_{\tp,[a,b]}^{2p-1}\Big),
	\label{yRpvarest}
	\end{eqnarray}
	where the last inequality applies \eqref{Nest} and the H\"older inequality.
	
	{\bf Step 2:} Next, for any two solutions $y_t(\bx,y_a)$ and $\ty_t(\bx,\ty_a)$ within the bounded range $\frac{1}{C_p}N^{\frac{2p-1}{p}}_{[a,b]}(\bx)$, consider their difference $z_t = \ty_t -y_t$, which satisfies the integral rough equation
	\[
	z_t = z_a +\int_a^t [g(\ty_s) - g(y_s)]dx_s.
	\]
	As a result, $y^\prime_s = g(y_s), \ty_s^\prime = g(\ty_s)$ and
	\allowdisplaybreaks
	\begin{eqnarray*}
		&&g(\ty_t) - g(y_t) - g(\ty_s) + g(y_s) \notag\\
		&=& \int_0^1 \Big[ Dg(\ty_s + \eta \ty_{s,t}) \ty_{s,t} - Dg(y_s + \eta y_{s,t})y_{s,t}\Big]d\eta \notag\\
		&=&\Big[ Dg(\ty_s)g(\ty_s) - Dg(y_s)g(y_s) \Big] \otimes x_{s,t}\\
		&&+ \int_0^1 \Big[Dg(\ty_s + \eta \ty_{s,t})R^{\ty}_{s,t} - Dg(y_s + \eta y_{s,t})R^y_{s,t}\Big] d\eta \notag\\
		&&+ \int_0^1 \Big\{\Big[Dg(\ty_s + \eta \ty_{s,t}) - Dg(\ty_s)\Big]g(\ty_s)\\
		&& - \Big[Dg(y_s + \eta y_{s,t}) - Dg(y_s)\Big] g(y_s)\Big\} \otimes x_{s,t} d\eta \notag\\
		&=&\Big[ Dg(\ty_s)g(\ty_s) - Dg(y_s)g(y_s) \Big] \otimes x_{s,t}\\
		&& + \int_0^1 \Big\{Dg(\ty_s + \eta \ty_{s,t})\Big(R^{\ty}_{s,t}-R^y_{s,t}\Big) \\
		&&+ \Big[Dg(\ty_s + \eta \ty_{s,t})- Dg(y_s + \eta y_{s,t})\Big] R^y_{s,t}\Big\}d\eta \notag\\
		&&+ \int_0^1 \Big[Dg(\ty_s + \eta \ty_{s,t}) - Dg(\ty_s)\Big] \Big[g(\ty_s)-g(y_s)\Big] \otimes x_{s,t}d\eta \notag\\
		&&+ \int_0^1 \Big[Dg(\ty_s + \eta \ty_{s,t}) - Dg(\ty_s) -Dg(y_s + \eta y_{s,t}) + Dg(y_s)\Big] g(y_s) \otimes x_{s,t} d\eta \notag \\
		&=&\Big[ Dg(\ty_s)g(\ty_s) - Dg(y_s)g(y_s) \Big] \otimes x_{s,t}  \notag\\
		&& + \int_0^1 \Big\{Dg(\ty_s + \eta \ty_{s,t}) R^{\ty-y}_{s,t} + \Big[Dg(\ty_s + \eta \ty_{s,t})- Dg(y_s + \eta y_{s,t})\Big] R^y_{s,t}\Big\}d\eta \notag\\
		&&+ \int_0^1 \Big[Dg(\ty_s + \eta \ty_{s,t}) - Dg(\ty_s)\Big] \Big[g(\ty_s)-g(y_s)\Big] \otimes x_{s,t}d\eta \notag\\
		&&+ \Big(\int_0^1 \int_0^1 D^2g(\ty_s + \mu\eta \ty_{s,t}) \eta (\ty_{s,t}-y_{s,t})d\mu d\eta \Big)g(y_s) \otimes x_{s,t} \\
		&&+ \Big(\int_0^1 \int_0^1\Big[D^2g(\ty_s + \mu\eta \ty_{s,t})- D^2g(y_s + \mu\eta y_{s,t})\eta y_{s,t}\Big] d\mu d\eta\Big) g(y_s) \otimes x_{s,t}.
	\end{eqnarray*} 
	This proves $[g(\ty) - g(y)]^\prime_s = Dg(\ty_s)g(\ty_s) - Dg(y_s)g(y_s)$ which has the form $Q(\ty_s) - Q(y_s)$. Notice that $\|Q(\ty_s) - Q(y_s)\|\leq 2C_g^2 \|z_s\|$ and
	\begin{eqnarray*}
		\ltn Q(\ty) - Q(y) \rtn_{\tp,[s,t]} &\leq& C_Q\Big(\ltn z \rtn_{\tp,[s,t]} + \|z\|_{\infty,[s,t]} \ltn y \rtn_{\tp,[s,t]}\Big)\\
		&\leq& 2C_g^2 \Big(\ltn z \rtn_{\tp,[s,t]} + \|z\|_{\infty,[s,t]} \ltn y \rtn_{\tp,[s,t]}\Big).
	\end{eqnarray*} 	
	In addition
	\begin{eqnarray*}
	&&	\ltn R^{g(\ty)-g(y)}\rtn_{\tq,[s,t]^2}\\
	 &\leq& C_g \ltn R^{z} \rtn_{\tq,[s,t]^2} + C_g \|z\|_{\infty,[s,t]} \ltn R^{y} \rtn_{\tq,[s,t]^2} \\
		&&+ \frac{1}{2}C_g^2 \ltn x \rtn_{\tp,[s,t]} \Big[\ltn z \rtn_{\tp,[s,t]} + \|z\|_{\infty} \Big(\ltn \ty \rtn_{\tp,[s,t]}+\ltn y \rtn_{\tp,[s,t]}\Big)\Big].
	\end{eqnarray*}
	Using the fact that
	\begin{eqnarray}\label{roughest1}
	\|z_{s,t}\| &\leq& \Big\| \int_s^t [g(\ty_u)-g(y_u)]dx_u\Big\| \notag\\
	&\leq& C_g \|z_s\| \ltn x \rtn_{\tp,[s,t]} + 2C^2_g \|z_s\| \ltn \X \rtn_{\tq,[s,t]^2} \notag\\
	&&+ C_p \Big\{\ltn x \rtn_{\tp,[s,t]} \ltn R^{g(\ty)-g(y)} \rtn_{\tq,[s,t]^2} \notag\\
	&& + \ltn \X \rtn_{\tq,[s,t]^2} \ltn [g(\ty)-g(y)]^\prime\rtn_{\tp,[s,t]} \Big \},
	\end{eqnarray}
	we can now estimate
	\begin{eqnarray}\label{roughest2}
	\ltn z \rtn_{\tp,[s,t]} &\leq& 2\Big\{C_g \ltn \bx \rtn_{\tp,[s,t]} \vee  C_g^2 \ltn \bx \rtn_{\tp,[s,t]}^2  \Big\} \times \notag\\
	&&\times  \Big\{\|z\|_{\infty,[s,t]} \Big[1+ C_p \Big(\ltn \ty,R^{\ty} \rtn_{\tp,[s,t]} + \ltn y,R^y \rtn_{\tp,[s,t]}\Big)\Big] \notag\\
	&&+ C_p \ltn z,R^z \rtn_{\tp,[s,t]} \Big\} \notag \\
	&\leq&  2C_p\Big\{C_g \ltn \bx \rtn_{\tp,[s,t]} \vee  C_g^2 \ltn \bx \rtn_{\tp,[s,t]}^2  \Big\} \times \notag \\
	&&\times  \Big(1 +\ltn \ty,R^{\ty} \rtn_{\tp,[s,t]} + \ltn y,R^y \rtn_{\tp,[s,t]} \Big)\times \notag\\
	&& \times \Big(\|z_s\|+ \ltn z,R^z \rtn_{\tp,[s,t]} \Big).
	\end{eqnarray}
	The similar estimate for $\ltn R^z \rtn_{\tq,[s,t]}$ is already included in the estimate \eqref{roughest2}. By combining \eqref{roughest2} with \eqref{yRpvarest}, we obtain
	\begin{eqnarray}\label{roughest3}
	&&\ltn z,R^z \rtn_{\tp,[s,t]} \notag\\
	&\leq& 4C_p\Big\{C_g \ltn \bx \rtn_{\tp,[s,t]} \vee  C_g^2 \ltn \bx \rtn_{\tp,[s,t]}^2  \Big\} \times \notag\\
	&&\times  \Big(1 +\ltn \ty,R^{\ty} \rtn_{\tp,[s,t]} + \ltn y,R^y \rtn_{\tp,[s,t]} \Big) \Big(\|z_s\|+ \ltn z,R^z \rtn_{\tp,[s,t]} \Big) \notag\\
	&\leq& 4C_p\Big\{C_g \ltn \bx \rtn_{\tp,[s,t]} \vee  C_g^2 \ltn \bx \rtn_{\tp,[s,t]}^2  \Big\} \Big(1 +\frac{2}{C_p}N^{\frac{2p-1}{p}}_{\frac{1}{8C_pC_g},[a,b]}(\bx) \Big) \times\notag\\
	&&\times \Big(\|z_s\|+ \ltn z,R^z \rtn_{\tp,[a,b]} \Big), \notag\\
	&\leq& 4C_p\Big\{C_g \ltn \bx \rtn_{\tp,[s,t]} \vee  C_g^2 \ltn \bx \rtn_{\tp,[s,t]}^2  \Big\} \times \notag\\
	&&\times \Big\{1 + \frac{1}{C_p} 2^{\frac{2p-1}{p}} \Big(1+ [8C_pC_g]^{2p-1} \ltn \bx \rtn_{\tp,[a,b]}^{2p-1}\Big)\Big\} \times \notag\\
	&& \times \Big(\|z_s\|+ \ltn z,R^z \rtn_{\tp,[a,b]} \Big), 
	\end{eqnarray}
	which yields 
	\[
	\ltn z,R^z \rtn_{\tp,[s,t]} \leq \|z_s\|\ \text{whenever}\ 8C_pC_g C(\bx,[a,b])\ltn \bx \rtn_{\tp,[s,t]}   \leq 1.
	\]
	Therefore, \eqref{ydiff} is followed directly from the usage of the greedy sequence of stopping times \eqref{greedybarN}, which is similar to \eqref{ysupest} and \eqref{yRpvarest}. In particular $\bar{N}$ can be estimated by
	\begin{eqnarray}\label{Nbarest}
	\bar{N}_{[a,b]}(\bx) &\leq& 1 + \Big(8C_pC_g C(\bx,[a,b])\ltn \bx \rtn_{\tp,[a,b]}\Big)^p\notag\\
	&\leq& 1 + \Big(8C_pC_g \ltn \bx \rtn_{\tp,[a,b]}\Big)^p \times \notag\\
	&& \times \Big\{1 + \frac{1}{C_p} 2^{\frac{2p-1}{p}} \Big(1+ [8C_pC_g]^{2p-1} \ltn \bx \rtn_{\tp,[a,b]}^{2p-1}\Big)\Big\}^p.
	\end{eqnarray}
\end{proof} 	

\begin{theorem}\label{prop2}
	The solution $y_t(\bx,y_a)$ of \eqref{RDEpart} is differentiable w.r.t. initial condition $y_a$, moreover, its derivatives $\frac{\partial y_t}{\partial y_a} (\bx,y_a)$ is the matrix solution of the linearized rough differential equation
	\begin{equation}\label{RDEpartlin}
	d\xi_t = Dg(y_t)\xi_t dx_t
	\end{equation}
\end{theorem}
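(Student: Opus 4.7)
The plan is to construct the candidate derivative $J_t$ as the unique matrix-valued solution of the linearized RDE and then identify it as $\partial y_t/\partial y_a$ by comparing it with the difference quotients of $y_t(\bx,\cdot)$. Concretely, I would fix the base solution $y_t = y_t(\bx,y_a)$ (a controlled rough path with $y^\prime_s = g(y_s)$ by Proposition \ref{prop1}) and solve
\[ dJ_t = Dg(y_t) J_t \, dx_t, \qquad J_a = \mathrm{Id}, \]
for $J \in \cD^{2\alpha}_x([a,b])$. The coefficient $A_s := Dg(y_s)$ is bounded by $C_g$ and is itself controlled by $x$ with Gubinelli derivative $A^\prime_s = D^2 g(y_s) g(y_s)$, and the equation is linear in $J$. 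Existence and uniqueness will follow by adapting the Schauder-Tychonoff argument of Theorem \ref{RDElinear} to a uniformly bounded, time-varying linear coefficient, combined with the greedy-time extension \eqref{greedytimeHol}.

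Next, for $h \in \R^d$ with $\|h\| = 1$ and $\eps \in (0,1]$ I set $\ty^\eps_t := y_t(\bx, y_a + \eps h)$ and $\Delta^\eps_t := (\ty^\eps_t - y_t)/\eps$. The fundamental theorem of calculus gives $g(\ty^\eps_s) - g(y_s) = \eps A^\eps_s \Delta^\eps_s$ with $A^\eps_s := \int_0^1 Dg(y_s + \theta \eps \Delta^\eps_s)\, d\theta$, so subtracting the integral equations for $\ty^\eps$ and $y$ and dividing by $\eps$ yields
\[ \Delta^\eps_t = h + \int_a^t A^\eps_s \Delta^\eps_s \, dx_s. \]
Proposition \ref{prop1} applied with $\ty_a - y_a = \eps h$ bounds $\|\Delta^\eps\|_{\infty,[a,b]}$ and $\ltn \Delta^\eps, R^{\Delta^\eps} \rtn_{\tp,[a,b]}$ uniformly in $\eps$, so $\Delta^\eps$ stays in a fixed bounded ball of $\cD^{2\alpha}_x([a,b])$.

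Setting $\zeta^\eps_t := \Delta^\eps_t - J_t h$, subtraction of the two integral equations produces
\[ \zeta^\eps_t = \int_a^t A^\eps_s \zeta^\eps_s \, dx_s + r^\eps_t, \qquad r^\eps_t := \int_a^t [A^\eps_s - Dg(y_s)] J_s h \, dx_s. \]
A first-order Taylor expansion of $Dg$ around $y_s$, together with the $C^3_b$ bounds \eqref{gcond} and the uniform bounds on $\Delta^\eps$, will show that $\|A^\eps - Dg(y)\|_\infty$ and the seminorms of its Gubinelli derivative and $2\alpha$-remainder are all $O(\eps)$; the rough-integral estimate \eqref{roughEst} then forces $\|r^\eps\|_{x,2\alpha,[a,b]} = O(\eps)$. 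Feeding this source into the linear stability estimate of Theorem \ref{RDElinear} (adapted to the time-varying bounded coefficient $A^\eps$) yields $\|\zeta^\eps\|_{\infty,[a,b]} \to 0$ as $\eps \to 0$, so $\partial y_t/\partial y_a \cdot h = J_t h$ for every unit $h$, giving \eqref{RDEpartlin}.

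The hard part will be the controlled-rough-path convergence $A^\eps \to Dg(y)$ at rate $\eps$: one must not only control the sup norm but also show that the Gubinelli derivative $(A^\eps)^\prime_s = \int_0^1 D^2 g(y_s + \theta\eps\Delta^\eps_s)[g(y_s) + \theta \eps (\Delta^\eps_s)^\prime]\, d\theta$ converges to $D^2 g(y_s) g(y_s)$ in $\alpha$-H\"older seminorm at rate $\eps$, with a matching bound on the $2\alpha$-remainder. This relies on the boundedness of $D^3 g$ supplied by \eqref{gcond} and the uniform $p$-variation bounds on $\Delta^\eps$ from Proposition \ref{prop1}. Once this quantitative convergence is established, the Gronwall-type closure through Theorem \ref{RDElinear} is routine.
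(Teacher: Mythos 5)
Your proposal follows the paper's strategy in its essentials -- construct the linearized flow, subtract it from the difference of solutions, and show the driven term is quadratic/small via controlled-rough-path estimates -- but it differs in the algebraic decomposition and in how the stability estimate is obtained. The paper sets $r_t = \bar y_t - y_t - \xi_t$ and writes, with $z_s = \bar y_s - y_s$,
\[
r_t = \int_a^t \Bigl(\int_0^1 [Dg(y_s + \eta z_s) - Dg(y_s)]\, d\eta\Bigr) z_s\, dx_s + \int_a^t Dg(y_s) r_s\, dx_s ,
\]
so the linear part has the \emph{fixed} coefficient $Dg(y_s)$ and the source carries the quadratic term $[A_s - Dg(y_s)]z_s$; you instead keep the averaged coefficient $A^\eps_s$ in the linear part and push $[A^\eps_s - Dg(y_s)]J_s h$ into the source. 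Both are legitimate: the paper's choice has an $\eps$-independent stability constant by construction, while yours needs the uniform-in-$\eps$ controlled-path bounds on $A^\eps$, which you correctly note follow from Proposition~\ref{prop1} and boundedness by $C_g$. A second difference is that the paper does not invoke Theorem~\ref{RDElinear} in Step~2 at all (that theorem treats an autonomous constant-coefficient linear equation); it derives the needed inhomogeneous-linear estimate directly, via explicit controlled-norm bounds on $R^{Dg(y)r}$, $[Dg(y)r]'$, greedy stopping times, and a discrete Gronwall argument. Your plan to ``adapt Theorem~\ref{RDElinear} to a time-varying bounded coefficient'' is reasonable but that adaptation is itself a non-trivial step and, as presented, is essentially what the paper carries out from scratch. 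Finally, you correctly identify that the genuine work is the controlled-path rate $A^\eps \to Dg(y)$ (sup norm, Gubinelli derivative, and $2\alpha$-remainder, all at order $\eps$), which is the content of the paper's Step~3 estimates of $\bar e'$ and $R^{\bar e}$ using the $D^3g$ bound. No gap, but the ``routine'' closing step should be written out as in the paper rather than referred to Theorem~\ref{RDElinear}.
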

\begin{proof}
	The proof is divided into several steps.
	
	{\bf Step 1:} First, for a fixed solution $y_t(\bx,y_a)$ on $[a,b]$, we need to prove the existence and uniqueness of the solution of the linearized rough differential equation \eqref{RDEpartlin}, which has the time dependent coefficient $\Sigma_t := Dg(y_t)$. To do that, we simply follow Gubinelli's method by considering the solution mapping
	$H_t = \xi_a + \int_a^t \Sigma_s \xi_s dx_s$ on the set $\cD^{2\alpha}_x([a,b],\xi_a,\Sigma_a \xi_a)$ of the controlled paths $\xi_t$  with the fixed initial conditions $\xi_a, \xi^\prime_a = \Sigma_a \xi_a$. Note that $\Sigma_t = Dg(y_t)$ is also controlled by $x$ with
	\[
	\Sigma_{s,t} = \int_0^1 D^2g(y_s + \eta y_{s,t})(g(y_s) \otimes x_{s,t} + R^y_{s,t}) d\eta,
	\]
	thus 
	\[
	\Sigma^\prime_s = D^2_g(y_s)g(y_s),\quad \|R^\Sigma_{s,t}\| \leq C_g \|\R^y_{s,t}\| + \frac{1}{2}C_g^2 \|y_{s,t}\| \|x_{s,t}\|.
	\]
	As a result, $\Sigma_t \xi_t$ is also controlled by $x$ with $[\Sigma_\cdot \xi_\cdot]_s^\prime = \Sigma_s^\prime \xi_s + \Sigma_s \xi^\prime_s$ and 
	\[
	\|R^{\Sigma_\cdot \xi_\cdot}_{s,t}\| \leq \|\Sigma_{s,t}\| \|\xi_{s,t}\| + \|\xi_s\| \|R^{\Sigma}_{s,t}\| + \|\Sigma_s\|\|R^\xi_{s,t}\|.
	\]
	It then enable us to estimate 
	\begin{eqnarray*}
		&&\|H_{s,t} - \Sigma_s \xi_s \otimes x_{s,t} + [\Sigma_s^\prime \xi_s + \Sigma_s \xi^\prime_s]  \X_{s,t} \| \\
		&\leq& \Big\|\int_s^t \Sigma_u \xi_u dx_u - \Sigma_s \xi_s  \otimes x_{s,t} + [\Sigma_s^\prime \xi_s + \Sigma_s \xi^\prime_s]   \X_{s,t}\Big\| \\
		&&+ C_\alpha (t-s)^{3\alpha} \Big(\ltn x \rtn_{\alpha,[s,t]} \ltn R^{\Sigma_\cdot \xi_\cdot}\rtn_{2\alpha,[s,t]} + \ltn \X \rtn_{2\alpha,[s,t]} \ltn [\Sigma_\cdot \xi_\cdot]^\prime \rtn_{\alpha,[s,t]} \Big)
	\end{eqnarray*}
	where
	\begin{eqnarray*}
		\ltn [\Sigma_\cdot \xi_\cdot]^\prime \rtn_{\alpha} &\leq& \|\Sigma^\prime\|_\infty \ltn \xi \rtn_\alpha + \ltn \Sigma^\prime \rtn_\alpha \|\xi\|_\infty + \|\Sigma\|_\infty \ltn \xi^\prime \rtn_\alpha + \ltn \Sigma \rtn_\alpha \|\xi^\prime \|_\infty,\\
		\ltn R^{\Sigma_\cdot \xi_\cdot} \rtn_{2\alpha} &\leq& \ltn \Sigma \rtn_\alpha \ltn \xi \rtn_\alpha + \ltn R^\Sigma \rtn_{2\alpha} \|\xi\|_\infty + \|\Sigma\|_\infty \ltn R^\xi \rtn_{2\alpha}\quad \text{with}\\ 
		\|\xi^\prime \|_\infty &\leq& \|\xi^\prime_a\|+ (t-a)^\alpha \ltn \xi^\prime \rtn_\alpha \leq \|\Sigma\|_\infty \|\xi_a\| + (t-a)^\alpha \ltn \xi,\xi^\prime \rtn_{2\alpha}, \\
		\ltn \xi\rtn_\alpha &\leq& \|\xi^\prime \|_\infty \ltn x \rtn_\alpha + (t-a)^\alpha \ltn R^\xi\rtn_{2\alpha} \\
		&\leq& \|\Sigma\|_\infty \ltn x \rtn_\alpha \|\xi_a\| + \Big(\ltn x \rtn_\alpha \vee 1 \Big) (t-a)^\alpha \ltn \xi, \xi^\prime \rtn_{2\alpha},\\
		\|\xi\|_\infty &\leq& \|\xi_a\| + (t-a)^\alpha \ltn \xi\rtn_\alpha \\
		&\leq& \|\xi_a\|\Big(1+ \|\Sigma\|_\infty (t-a)^\alpha \ltn x \rtn_\alpha\Big)\\
		&&+ (t-a)^{2\alpha} \Big(\ltn x \rtn_\alpha \vee 1\Big) \ltn \xi, \xi^\prime \rtn_{2\alpha}.
	\end{eqnarray*}
	A direct computation then shows that
	\begin{eqnarray*}
		\ltn [\Sigma_\cdot \xi_\cdot]^\prime \rtn_{\alpha} &\leq& \Big\{ \|\Sigma \|_\infty + (t-a)^\alpha \ltn \Sigma \rtn_\alpha  + \|\Sigma^\prime \|_\infty (t-a)^\alpha (\ltn x \rtn_\alpha \vee 1)\\
		&& + \ltn \Sigma^\prime \rtn_\alpha (t-a)^{2\alpha}(\ltn x \rtn_\alpha \vee 1) \Big\} \ltn \xi, \xi^\prime \rtn_{2\alpha}\\
		&&+ \Big\{\ltn \Sigma\rtn_\alpha \|\Sigma\|_\infty + \|\Sigma^\prime \|_\infty \|\Sigma\|_\infty \ltn x \rtn_\alpha\\
		&& + \ltn \Sigma^\prime \rtn_\alpha \Big(1 + \|\Sigma\|_\infty (t-a)^\alpha \ltn x \rtn_\alpha\Big) \Big\}\|\xi_a\|\\
		\ltn R^{\Sigma_\cdot \xi_\cdot} \rtn_{2\alpha} &\leq& \Big\{\|\Sigma\|_\infty + \ltn \Sigma \rtn_\alpha (t-a)^\alpha (\ltn x \rtn_\alpha \vee 1)\\
		&& + \ltn R^\Sigma \rtn_{2\alpha} (t-a)^{2\alpha} (\ltn x \rtn_\alpha \vee 1) \Big \} \ltn \xi, \xi^\prime \rtn_{2\alpha} \\
		&&+ \Big\{\ltn \Sigma \rtn_\alpha \|\Sigma \|_\infty \ltn x \rtn_\alpha \\
		&&+ \ltn R^{\Sigma} \rtn_{2\alpha} \Big(1 + \|\Sigma\|_\infty (t-a)^\alpha \ltn x \rtn _\alpha\Big) \Big \} \|\xi_a\|.
	\end{eqnarray*}
	Hence 
	\begin{eqnarray*}
		\ltn R^H \rtn_{2\alpha} &\leq& \Big(\|\Sigma^\prime \|_\infty \|\xi\|_\infty + \|\Sigma\|_\infty \|\xi^\prime \|_\infty \Big) \ltn \X \rtn_{2\alpha}\\
		&& + C_\alpha (t-a)^\alpha \Big(\ltn x \rtn_\alpha \ltn R^{\Sigma_\cdot \xi_\cdot} \rtn_{2\alpha} + \ltn \X \rtn_{2\alpha} \ltn [\Sigma_\cdot \xi_\cdot]^\prime \rtn_\alpha \Big);\\
		\ltn H^\prime \rtn_\alpha &\leq&  \|\Sigma\|_\infty \ltn \xi \rtn_\alpha + \ltn \Sigma \rtn_\alpha \|\xi\|_\infty \\
		&\leq& \Big\{\|\Sigma\|_\infty (t-a)^\alpha (\ltn x \rtn_\alpha \vee 1) + \ltn \Sigma \rtn_\alpha (t-a)^{2\alpha} (\ltn x \rtn_\alpha \vee 1) \Big \} \ltn \xi, \xi^\prime \rtn_{2\alpha}\\
		&&+ \Big\{\|\Sigma \|_\infty^2 \ltn x \rtn_\alpha + \ltn \Sigma \rtn_\alpha \Big[1 + \|\Sigma\|_\infty (t-a)^\alpha \ltn x \rtn_\alpha\Big]  \Big \}\|\xi_a\|.
	\end{eqnarray*}
	All things combined, we have just showed that there exists constants 
	\[
	M_1=M_1(\Sigma,[a,b],x,\X), M_2 = M_2(\Sigma,[a,b],x,\X)
	\]
	such that
	\begin{eqnarray}\label{normest}
	\ltn H^\prime \rtn_\alpha + \ltn R^H \rtn_{2\alpha} &\leq& M_2 \|\xi_a\| \notag\\
	&& + M_1 \Big[(t-a)^\alpha + \ltn x \rtn_\alpha + \ltn \X \rtn_{2\alpha} \Big] \ltn (\xi, \xi^\prime) \rtn_{2\alpha}.
	\end{eqnarray}
	This implies that on every interval $[\bar{\tau}_k,\bar{\tau}_{k+1}]$ of the greedy sequence of stopping times~  $\{\bar{\tau}_i(\frac{1}{2M_1},I,\alpha)\}_{i \in \N}$ as in \eqref{greedytimeHol}, the solution mapping is a contraction from the set 
	\[
	\Big\{\cD^{2\alpha}_x([a,b],\xi_a,\Sigma_a \xi_a): \ltn (\xi,\xi^\prime)\rtn_{2\alpha,[\bar{\tau}_k,\bar{\tau}_{k+1}]} \leq 2M_2 \|\xi_{\bar{\tau}_k}\| \Big\}  
	\]
	into itself, hence there exists a solution of \eqref{RDEpartlin} on every interval $[\bar{\tau}_k,\bar{\tau}_{k+1}]$. Because of the linearity, it follows from the same estimate \eqref{normest} with $\|\xi_a- \bar{\xi}_a\| = 0$ that $\ltn (\xi - \bar{\xi}, \xi^\prime - \bar{\xi}^\prime) \rtn_{2\alpha} = 0$, which proves the uniqueness. Finally, the concatenation of solutions on intervals $[\bar{\tau}_k,\bar{\tau}_{k+1}]$ then proves the existence and uniqueness of the solution of \eqref{RDEpartlin} on $[a,b]$.
	
	{\bf Step 2:} Denote by $\Phi(t,x,y_a)$ the solution matrix of the linearized system \eqref{RDEpartlin}, then $\xi_t = \Phi(t,x,y_a)(\bar{y}_a-y_a)$ is the solution of \eqref{RDEpartlin} given initial point $\xi_a =\bar{y}_a-y_a$. Assign $r_t := \bar{y}_t - y_t - \xi_t$, then $r_a =0$ and
	\begin{eqnarray}\label{equationrt}
	r_t &=& \int_a^t \Big[\int_0^1 Dg(y_s + \eta(\bar{y}_s-y_s)) - Dg(y_s)\Big](\bar{y}_s -y_s)d\eta dx_s \notag\\
	&& + \int_a^t Dg(y_s) r_s dx_s,\notag\\
	&=& e_{a,t}+\int_a^t Dg(y_s) r_s dx_s, \quad \forall t \in [a,b],
	\end{eqnarray}	
	where 
	\[
	e_{a,t} = \int_a^t \int_0^1 \Big[Dg(y_s + \eta(\bar{y}_s-y_s)) - Dg(y_s)\Big](\bar{y}_s - y_s)d\eta dx_s 
	\]
	and $e$ is also controlled by $x$ with $e_{a,a} =0$. We are going to estimate 
	$\|r\|_{\infty,[a,b]}$ and $\ltn r,R^r\rtn_{\tp,[a,b]}$ through $\|e^\prime\|_{\infty,[a,b]},  \ltn e^\prime \rtn_{\tp,[a,b]}, \ltn R^e \rtn_{\tq,[a,b]^2}$. First observe that 
	\[
	r_{s,t} = e_{s,t} + \int_s^t Dg(y_u)r_u dx_u = e^\prime  \otimes x_{s,t} + R^e_{s,t} + \int_s^t Dg(y_u)r_u dx_u,
	\]
	which yields $r^\prime_s = e^\prime_s + Dg(y_s)r_s$ and
	\begin{eqnarray}\label{Rr}
	\|R^r_{s,t}\| &\leq& \|R^e_{s,t}\| + \|[Dg(y)r]^\prime_s \| \|\X_{s,t}\|\notag\\
	&& + C_p \Big(\ltn x \rtn_{p} \ltn R^{Dg(y)r}\rtn_{q} + \ltn \X \rtn_{q} \ltn [Dg(y)r]^\prime \rtn_{p} \Big).
	\end{eqnarray}
	A direct computation shows that
	\begin{eqnarray*}
		&&\Big\| Dg(y_t)r_t - Dg(y_s)r_s - \Big [ D^2_g (y_s) g(y_s) r_s + Dg(y_s)r^\prime_s \Big] \otimes x_{s,t} \Big\| \\
		&\leq& \Big\|\int_0^1 D^2g(y_s + \eta y_{s,t})R^y_{s,t} r_s d\eta \Big\| + \frac{1}{2}C_g^2 \|y_{s,t}\|\|r_s\| \|x_{s,t}\| \\
		&&+ \|Dg(y_s)R^y_{s,t}\| + C_g \|y_{s,t}\| \|r_{s,t}\|, 
	\end{eqnarray*}
	which implies that $[Dg(y)r]^\prime_s = D^2g(y_s)[g(y_s),r_s] + Dg(y_s)r^\prime_s$ and
	\begin{eqnarray*}
		\|R^{D(y)r}_{s,t}\| &\leq& C_g \|R^y_{s,t}\| \|r\|_\infty + \frac{1}{2}C_g^2 \|r\|_\infty \|y_{s,t}\| \|x_{s,t}\| \\
		&& + C_g \|R^r_{s,t}\| + C_g \|y_{s,t}\| \|r_{s,t}\| \\
		\Rightarrow \ltn R^{Dg(y)r}\rtn_{q} &\leq& C_g \|r\|_\infty \ltn R^y \rtn_q + \frac{1}{2}C_g^2 \|r\|_\infty \ltn y \rtn_p \ltn x \rtn_p \\
		&& + C_g \ltn R^r \rtn_q + C_g \ltn y \rtn_p \ltn r \rtn_p.
	\end{eqnarray*}
	Similarly, we can show that
	\begin{eqnarray*}
		\|[Dg(y)r]^\prime\|_\infty &\leq& C^2_g \|r\|_\infty + C_g \|r^\prime\|_\infty \\
		\ltn [Dg(y)r]^\prime \rtn_p &\leq& 2C_g^2 \|r\|_\infty \ltn y \rtn_p + C_g^2 \ltn r \rtn_p + C_g \ltn r^\prime \rtn_p + C_g \|r^\prime\|_\infty \ltn y \rtn_p.
	\end{eqnarray*}
	Combining all the above estimates into \eqref{Rr}, we obtain
	\begin{eqnarray*}
		&&\ltn R^r \rtn_q \leq C_pC_g \ltn x \rtn_p \ltn R^r \rtn_q  \\
		&&+ \ltn r \rtn_p \Big\{2C_g^2 \ltn \X \rtn_q + C_p \ltn x \rtn_p \Big[C_g \ltn R^y \rtn_q + \frac{1}{2}C_g^2 \ltn x \rtn_p \ltn y \rtn_p + C_g \ltn y \rtn_p\Big] \\
		&& + C_g \ltn \X \rtn_q (4C_g^2 \ltn y \rtn_p + 2C_g^2) \Big\}\\
		&&+ \|r_s\| \Big\{2C_g^2 \ltn \X \rtn_q + C_p \ltn x \rtn_p \Big[C_g \ltn R^y \rtn_q + \frac{1}{2}C_g^2 \ltn x \rtn_p \ltn y \rtn_p\Big] + 4C_pC_g^2 \ltn \X \rtn_q \Big\}\\
		&&+ \ltn R^e \rtn_q + C_g \ltn \X \rtn_q\|e^\prime\|_\infty + C_p \ltn \X \rtn_q (C_g \ltn e^\prime \rtn_p + C_g \ltn y \rtn_p \|e^\prime\|_\infty) =: \bar{R} ,
	\end{eqnarray*}
	and similarly 
	\[
	\ltn r \rtn_p \leq \|e^\prime \|_\infty \ltn x \rtn_p + C_g \|r\|_\infty \ltn x \rtn_p + \bar{R} \leq  \|e^\prime \|_\infty \ltn x \rtn_p + C_g \ltn x \rtn_p (\|r_s\| + \ltn r \rtn_p)+ \bar{R}. 
	\] 	
	Therefore, taking into account \eqref{yRpvarest} we have just proved that there exists a constant 
	\[
	M = M(p,[a,b], \ltn \bx \rtn_{\tp,[a,b]})>1 
	\]
	such that
	\begin{eqnarray*}
	&&	\ltn r,R^r\rtn_{\tp,[s,t]} \\
	&\leq& M \Big(C_g \ltn x \rtn_{\tp,[s,t]} + C_g^2 \ltn \X \rtn_{\tq,[s,t]^2} \Big) \Big(\|r_s\| + \ltn r,R^r \rtn_{\tp,[s,t]} \Big) \\
		&& + M \Big(\|e^\prime \|_{\infty,[a,b}] + \ltn e^\prime \rtn_{\tp,[a,b]} + \ltn R^e \rtn_{\tq,[a,b]^2} \Big)\\
		&\leq& 2M \Big(C_g \ltn \bx \rtn_{\tp,[s,t]} \vee C_g^2 \ltn \bx \rtn^2_{\tq,[s,t]} \Big) \Big(\|r_s\| + \ltn r,R^r \rtn_{\tp,[s,t]} \Big) \\
		&& + M \Big(\|e^\prime \|_{\infty,[a,b}] + \ltn e^\prime \rtn_{\tp,[a,b]} + \ltn R^e \rtn_{\tq,[a,b]^2} \Big).
	\end{eqnarray*}
	This implies
	\[
	\ltn r,R^r\rtn_{\tp,[s,t]}  \leq \|r_s\| + 2M \Big(\|e^\prime \|_{\infty,[a,b}] + \ltn e^\prime \rtn_{\tp,[a,b]} + \ltn R^e \rtn_{\tq,[a,b]^2} \Big)
	\]
	whenever $2M C_g \ltn \bx \rtn_{\tp,[s,t]}  \leq \frac{1}{2}$. The similar estimates to \eqref{ysupest} and \eqref{yRpvarest}, with the usage of the greedy sequence of stopping times $\{\tau_k(\frac{1}{4MC_g})\}_{k \in \N}$, lead to
	\begin{eqnarray}\label{rEst}
	&& \|r\|_{\infty,[a,b]} \vee \ltn r,R^r\rtn_{\tp,[a,b]} \notag \\
	&\leq& N_{\frac{1}{4MC_g},[a,b]}(\bx)^{\frac{p-1}{p}}  e^{(\log 2) N_{\frac{1}{4MC_g},[a,b]}(\bx)} \notag \\
	&& \times \Big\{\|r_a\| + 2M \Big(\|e^\prime \|_{\infty,[a,b}] + \ltn e^\prime \rtn_{\tp,[a,b]} + \ltn R^e \rtn_{\tq,[a,b]^2} \Big) \Big\}\notag \\
	&\leq& 2M e^{(1+ \log 2) N_{\frac{1}{4MC_g},[a,b]}(\bx)}\times \notag\\
	&& \times \Big(\|e^\prime \|_{\infty,[a,b}] + \ltn e^\prime \rtn_{\tp,[a,b]} + \ltn R^e \rtn_{\tq,[a,b]^2} \Big) ,
	\end{eqnarray}
	where we use the fact that $r_a =0$.
	
	{\bf Step 3:} Assign $z_s = \ty_s-y_s$, it follows from \eqref{equationrt} that 
	\begin{eqnarray*}
	e^\prime_s &=& \int_0^1 [Dg(y_s + \eta z_s) - Dg(y_s)] z_s d\eta \\
	&=& \int_0^1 \int_0^1 D^2g\Big((1-\mu\eta)y_s + \mu \eta \ty_s\Big) [z_s,z_s] \eta d\mu d\eta = \bar{e}_s, 
	\end{eqnarray*}
	which 	is also controlled by $x$. As a result, $\|e^\prime_s\| \leq \frac{1}{2}C_g \|z_s\|^2 $ and 
	\begin{eqnarray}\label{eprime}
	\|e^\prime \|_{\infty,[a,b]} &\leq& \frac{1}{2}C_g \|z\|^2_{\infty,[a,b]},\\
	\ltn e^\prime \rtn_{\tp,[a,b]} &\leq& C_g \Big(\ltn y \rtn_{\tp,[a,b]} \vee \ltn \ty \rtn_{\tp,[a,b]}\Big) \|z\|^2_{\infty,[a,b]} \notag\\
	&&+ 2C_g \|z\|_{\infty,[a,b]} \ltn z \rtn_{\tp,[a,b]}.\notag
	\end{eqnarray}
	On the other hand,
	\begin{eqnarray*}
	\|R^e_{s,t}\| &\leq& \|\bar{e}^\prime \|_{\infty} \|\X_{s,t}\| + C_p \Big( \ltn x \rtn_{\tp,[a,b]}\ltn R^{\bar{e}} \rtn_{\tq,[a,b]^2} \notag\\
	&& + \ltn \X \rtn_{\tq,[a,b]} \ltn \bar{e}^\prime \rtn_{\tp,[a,b]} \Big) \notag\\
	\end{eqnarray*}
	thus	
	\begin{eqnarray}\label{Re}		
	\ltn R^e \rtn_{\tq,[a,b]^2} &\leq&  \|\bar{e}^\prime \|_{\infty,[a,b]} \ltn \X\rtn_{\tq,[a,b]^2} + C_p \Big( \ltn x \rtn_{\tp,[a,b]}\ltn R^{\bar{e}} \rtn_{\tq,[a,b]^2} \notag\\
	&& + \ltn \X \rtn_{\tq,[a,b]^2} \ltn \bar{e}^\prime \rtn_{\tp,[a,b]} \Big),
	\end{eqnarray}
	where a direct computation shows that
	\begin{eqnarray*}
		\bar{e}^\prime_s &=& \int_0^1 \int_0^1 \Big\{ D^3g\Big((1-\mu\eta)y_s + \mu \eta \ty_s\Big) [(1-\mu\eta)y^\prime_s + \mu \eta \ty^\prime_s,z_s,z_s]   \\
		&&+ 2 D^2g\Big((1-\mu\eta)y_s + \mu \eta \ty_s\Big) [z^\prime_s,z_s]\Big\} \eta d\mu d\eta,\\
		R^{\bar{e}}_{s,t} &=&\int_0^1 \int_0^1 \Big\{ R^{D^2g(\ty,y)}_{s,t}[z_t,z_t] \\
		&&+ D^2g(\ty,y)\Big([z_s,R^z_{s,t}] + [z^\prime_s x_{s,t}, z^\prime_s x_{s,t}+ R^z_{s,t}] + [R^z_{s,t},z_t]\Big) \\
		&&+ [D^2g(\ty,y)]^\prime_s x_{s,t} \Big([z_s + z_t, z^\prime_s + R^z_{s,t}]\Big)\Big\} \eta d\mu d\eta.
	\end{eqnarray*}
	We therefore can show that there exists a generic constant 
	\[
	D = D(p,[a,b], \ltn \bx \rtn_{\tp,[a,b]}) 
	\]
	such that
	\begin{eqnarray}\label{barEest}
	&&\|\bar{e}^\prime\|_{\infty,[a,b]} \vee \ltn \bar{e}^\prime \rtn_{\tp,[a,b]} \vee \ltn R^{\bar{e}} \rtn_{\tq,[a,b]^2} \notag\\
	&\leq& D \Big(\|z\|_{\infty,[a,b]} + \ltn z,R^z\rtn_{\tp,[a,b]}+\|z^\prime\|_{\infty,[a,b]} + \ltn z^\prime\rtn_{\tp,[a,b]}\Big)^2. 
	\end{eqnarray}
	By replacing \eqref{eprime}, \eqref{Re}, \eqref{barEest} into \eqref{rEst}, and using \eqref{ydiff}, we conclude that there exists a generic constant $D = D(p,[a,b], \ltn \bx \rtn_{\tp,[a,b]})$ such that 
	\begin{equation}\label{diffest}
	\|\ty_\cdot(\bx,\ty_a) - y_\cdot(\bx,y_a) - \xi_\cdot(\bx,\ty_a-y_a)\|_{\infty,[a,b]}  \leq D \|\ty_a - y_a\|^2.
	\end{equation}
	This, combined with the linearity of $\xi$ w.r.t. $\ty_a-y_a$, shows the differentiability of $y_t(\bx,y_a)$ w.r.t. $y_a$.
	
\end{proof}	

By similar arguments in the backward direction, one can prove the following.
\begin{corollary}\label{diffcor}
	Consider the backward rough differential equation
	\begin{equation}\label{backwardRDE}
	h_b = h_t + \int_t^b g(h_u) dx_u,\quad \forall t \in [a,b].
	\end{equation}	
	Then the solution $h_t(\bx,h_b)$ of \eqref{backwardRDE} is differentiable w.r.t. initial condition $h_b$, moreover, its derivatives $\frac{\partial h_t}{\partial h_b} (\bx,h_b)$ is the matrix solution of the linearized backward rough differential equation
	\begin{equation}
	\eta_b = \eta_t + \int_t^b Dg(h_u) \eta_u dx_u,\quad \forall t \in [a,b].
	\end{equation}	
	Moreover, the estimates \eqref{ydiff} also hold for the solution $h_t$ of the backward equation \eqref{backwardRDE}.
\end{corollary}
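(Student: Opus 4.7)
The plan is to reduce the backward equation \eqref{backwardRDE} to the forward setting already treated in Proposition \ref{prop1} and Theorem \ref{prop2} by time reversal. Define on $[a,b]$ the reversed path $\hat x_s := x_{a+b-s}$ and the reversed second level $\hat\X_{s,t} := \X_{a+b-t,\,a+b-s}$. A short computation shows that $\hat\bx = (\hat x, \hat\X)$ satisfies Chen's relation \eqref{chen} and that the $\tp$-seminorm and the $\nu$-H\"older seminorm are preserved under time reversal, since the class of finite partitions of $[a,b]$ is invariant under $s \mapsto a+b-s$. In particular $\ltn \hat\bx \rtn_{\tp,[a,b]} = \ltn \bx \rtn_{\tp,[a,b]}$, and the greedy sequences \eqref{greedytime}--\eqref{greedytimeHol} for $\hat\bx$ have the same counting function $N_{[a,b]}$ as for $\bx$.

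Second, I would verify that if $h$ satisfies \eqref{backwardRDE} then $\hat h_s := h_{a+b-s}$ satisfies the forward rough differential equation
\begin{equation*}
\hat h_s = h_b + \int_a^s g(\hat h_u)\, d\hat x_u, \qquad s \in [a,b],
\end{equation*}
interpreted in the Gubinelli sense against $\hat\bx$, with Gubinelli derivative $\hat h^\prime_s = g(\hat h_s)$. The key point is that the compensated Riemann sums defining the rough integral, $\sum (y_u \otimes x_{u,v} + y^\prime_u \X_{u,v})$, transform exactly into the corresponding sums for $\hat\bx$ under the change of variables $s = a+b-u$, because a single interval $[u,v]$ becomes $[a+b-v, a+b-u]$ and the signs from $x_{u,v}$ and from $\X_{u,v}$ combine with the orientation flip to reproduce the sewing-lemma construction for $\hat\bx$. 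In particular, $\hat h$ is controlled by $\hat x$ with remainder $R^{\hat h}_{s,t} = R^h_{a+b-t,\,a+b-s}$.

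Third, I would apply Proposition \ref{prop1} and Theorem \ref{prop2} to $\hat h$: this yields existence, uniqueness, uniform continuity in the initial condition $\hat h_a = h_b$, and differentiability of $\hat h_s(\hat\bx, h_b)$ with respect to $h_b$, the derivative being the matrix solution of the linearized forward equation $d\hat\eta_s = Dg(\hat h_s)\,\hat\eta_s\, d\hat x_s$. Translating back via $h_t = \hat h_{a+b-t}$ and $\eta_t = \hat\eta_{a+b-t}$, and using the invariance of the seminorms and of $N_{[a,b]}, \bar N_{[a,b]}$ under time reversal, produces both the claimed linearized backward equation and the estimates \eqref{ydiff} with identical constants.

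The main obstacle will be the second step: a careful verification that the Gubinelli rough integral transforms correctly under time reversal, i.e.\ that the controlled-path structure and the compensator $y^\prime_u \X_{u,v}$ match up with the reversed data $(\hat x, \hat\X)$ after the substitution $u \leftrightarrow a+b-u$. Once this natural-ity is established, the rest is a pure transport of the forward results, and no re-derivation of the contraction and Gronwall estimates in Proposition \ref{prop1} and Theorem \ref{prop2} is required.
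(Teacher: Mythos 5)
The overall strategy — reduce the backward RDE to the forward setting by time reversal, then invoke Proposition~\ref{prop1} and Theorem~\ref{prop2} — is a reasonable and natural way to make precise the paper's remark that one argues ``by similar arguments in the backward direction.'' The paper itself gives no details, so there is nothing concrete to compare against; but your reduction contains a genuine error in its first step, and the ``short computation'' you claim does not actually go through.

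The naive reversed second level $\hat\X_{s,t} := \X_{a+b-t,\,a+b-s}$ does \emph{not} satisfy Chen's relation for vector-valued paths. Write $\sigma = a+b$ and set $a' = \sigma - t$, $b' = \sigma - u$, $c' = \sigma - s$ with $s \le u \le t$. Then $\hat x_{s,u} = -x_{b',c'}$, $\hat x_{u,t} = -x_{a',b'}$, so the right-hand side of Chen is $\hat x_{s,u}\otimes \hat x_{u,t} = x_{b',c'}\otimes x_{a',b'}$; but your $\hat\X$ gives
\[
\hat\X_{s,t}-\hat\X_{s,u}-\hat\X_{u,t} = \X_{a',c'}-\X_{b',c'}-\X_{a',b'} = x_{a',b'}\otimes x_{b',c'},
\]
which is the \emph{transpose} of what is needed. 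The two sides coincide only when $m=1$ or when $\X_{s,t}$ is a symmetric tensor. The correct reversed second level is
\[
\hat\X_{s,t} \;=\; x_{\sigma-t,\sigma-s}\otimes x_{\sigma-t,\sigma-s} \;-\; \X_{\sigma-t,\sigma-s},
\]
and you can check via the binomial expansion of $x_{a',c'}\otimes x_{a',c'}$ together with Chen for $\X$ that this does satisfy Chen's relation; for geometric rough paths it coincides with $(\X_{\sigma-t,\sigma-s})^{T}$. This missing quadratic term is precisely what makes the compensated Riemann sums match in your second step: writing out $g(\hat h_u)\hat x_{u,v} + Dg(\hat h_u)g(\hat h_u)\hat\X_{u,v}$ with $u' = \sigma - v$, $v' = \sigma - u$, the difference $g(h_{v'}) - g(h_{u'}) \approx Dg(h_{u'})g(h_{u'})\,x_{u',v'}$ produces an extra $Dg\,g\cdot(x\otimes x)$ contribution, and it is exactly the $x\otimes x$ term in the corrected $\hat\X$ that cancels it, leaving $-\bigl[g(h_{u'})x_{u',v'} + Dg(h_{u'})g(h_{u'})\X_{u',v'}\bigr]$ up to $O(|v'-u'|^{3\alpha})$. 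Without the correction, the sewing germs do \emph{not} agree and the claimed identification of the rough integrals fails. Relatedly, your remainder formula $R^{\hat h}_{s,t} = R^h_{\sigma-t,\sigma-s}$ is off by a sign and by the correction term $h'_{\sigma-t,\sigma-s}\,x_{\sigma-t,\sigma-s}$, which is needed for $\hat h$ to be controlled by $\hat x$ with Gubinelli derivative $\hat h'_s = h'_{\sigma-s} = g(\hat h_s)$. Fix the definition of $\hat\X$ (and of $R^{\hat h}$) and the rest of your argument should go through.
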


\begin{corollary}\label{lipschitzcor}
	Denote by $\varphi(t,\bx,y_a)$ the solution mapping of the rough equation \eqref{RDEpart}. Then $ \frac{\partial \varphi}{\partial y}(t,\bx,y_a)$ is globally Lipschitz continuous w.r.t. $y_a$.
\end{corollary}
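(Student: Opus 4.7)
The plan is to reduce the claim to a forced linear rough differential equation for the difference of the two Jacobian matrices, and then apply the same Gronwall-with-greedy-stopping-times machinery used in Proposition~\ref{prop1} and Step~1 of Theorem~\ref{prop2}. Concretely, for two initial data $y_a,\ty_a$, let $y_t=\varphi(t,\bx,y_a)$, $\ty_t=\varphi(t,\bx,\ty_a)$, and set
\[
\Phi_t:=\frac{\partial \varphi}{\partial y}(t,\bx,y_a),\qquad \tilde\Phi_t:=\frac{\partial \varphi}{\partial y}(t,\bx,\ty_a).
\]
By Theorem~\ref{prop2}, $\Phi$ and $\tilde\Phi$ are the matrix solutions of the linearized equations $d\Phi_t=Dg(y_t)\Phi_t dx_t$ and $d\tilde\Phi_t=Dg(\ty_t)\tilde\Phi_t dx_t$ with $\Phi_a=\tilde\Phi_a=I$. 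Setting $\Psi_t:=\Phi_t-\tilde\Phi_t$, subtraction gives $\Psi_a=0$ and
\[
\Psi_t=\int_a^t Dg(y_s)\Psi_s\,dx_s+\int_a^t\bigl[Dg(y_s)-Dg(\ty_s)\bigr]\tilde\Phi_s\,dx_s,
\]
i.e.\ an inhomogeneous linear rough equation for $\Psi$ with coefficient $Dg(y_\cdot)$ and forcing term $F_t:=\int_a^t[Dg(y_s)-Dg(\ty_s)]\tilde\Phi_s\,dx_s$.

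The first step is to bound the controlled-path data of $F$. I would show that $(Dg(y)-Dg(\ty),\,\tilde\Phi)$ is a controlled rough path and estimate its Gubinelli derivative and remainder by imitating the Taylor-expansion computation used for $g(\ty)-g(y)$ in Step~2 of Proposition~\ref{prop1}, now invoking the bound $\|D^2g\|_\infty,\|D^3g\|_\infty\le C_g$ from (${\bf H}_2$). Combined with the a~priori bounds on $\tilde\Phi$ (obtained from the linear norm estimate \eqref{normest} of Step~1 in Theorem~\ref{prop2}) and the Lipschitz estimate \eqref{ydiff} of Proposition~\ref{prop1} for $y-\ty$, this yields a bound of the form
\[
\|F^\prime\|_\infty+\ltn F^\prime\rtn_{\tp,[a,b]}+\ltn R^F\rtn_{\tq,[a,b]^2}\leq K(\bx,[a,b])\,\|y_a-\ty_a\|,
\]
where $K$ depends only on $\bx$, $[a,b]$, $C_g$ and quantities already controlled.

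The second step treats $\Psi$ itself as a controlled path driven by the equation above. The linear part $\int Dg(y_s)\Psi_s\,dx_s$ is estimated exactly as in Step~1 of Theorem~\ref{prop2}, giving an inequality
\[
\ltn\Psi,R^\Psi\rtn_{\tp,[s,t]}\leq M\bigl(C_g\ltn\bx\rtn_{\tp,[s,t]}\!\vee\!C_g^2\ltn\bx\rtn_{\tp,[s,t]}^2\bigr)\bigl(\|\Psi_s\|+\ltn\Psi,R^\Psi\rtn_{\tp,[s,t]}\bigr)+M\,K\|y_a-\ty_a\|
\]
on each interval. Choosing the greedy sequence $\{\tau_k(\tfrac{1}{4MC_g},[a,b],p)\}$ of \eqref{greedytime} and concatenating the estimates (exactly as in the derivation of \eqref{ysupest}–\eqref{yRpvarest} and \eqref{rEst}), I obtain
\[
\|\Psi\|_{\infty,[a,b]}\leq C(\bx,[a,b])\,\|y_a-\ty_a\|,
\]
which is the desired global Lipschitz bound for $\frac{\partial\varphi}{\partial y}$ in $y_a$.

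The main obstacle is bookkeeping rather than conceptual: one must carefully track the Gubinelli derivative and remainder of the product $[Dg(y)-Dg(\ty)]\tilde\Phi$, because both factors are themselves controlled rough paths and their product norms involve cross terms of the type handled in the long algebraic expansion in Step~2 of Proposition~\ref{prop1}. Once this expansion is written down and the a~priori bound \eqref{ydiff} on $y-\ty$ and the linear estimate on $\tilde\Phi$ are inserted, the Gronwall/greedy-stopping argument goes through verbatim, so the proof is essentially an application of the tools already developed and may reasonably be omitted as indicated.
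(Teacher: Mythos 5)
Your proposal follows essentially the same route as the paper: you subtract the two linearized RDEs to obtain an inhomogeneous linear rough equation for the Jacobian difference with coefficient $Dg(y_\cdot)$ and forcing $\int [Dg(y)-Dg(\ty)]\tilde\Phi\,dx$, estimate the controlled-rough-path data of the forcing using the $C^3_b$ bounds on $g$ together with the a priori bound on $\tilde\Phi$ and the Lipschitz estimate \eqref{ydiff} for $y-\ty$, and then close via the Gronwall/greedy-stopping-time argument of Step~2 of Theorem~\ref{prop2}. The only cosmetic differences are that you work with the matrix solutions $\Phi,\tilde\Phi$ directly rather than applying the Jacobian to a common test vector $\xi_a$, and that you invoke the contraction estimate \eqref{normest} for the bound on $\tilde\Phi$ instead of the quotient argument the paper uses to derive \eqref{phiest}--\eqref{phiestya2}; both choices yield the same conclusion.
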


\begin{proof}
	Observe from \eqref{diffest} and \eqref{ydiff} that
	\begin{eqnarray*}
	\|\xi_\cdot(\bx,\ty_a-y_a)\|_{\infty,[a,b]} &\leq& \|\ty_\cdot(\bx,\ty_a) - y_\cdot(\bx,y_a) \| + D \|\ty_a - y_a\|^2 \\
	&\leq& \|\ty_a -y_a\|e^{(\log 2) \bar{N}_{[a,b]}(\bx)}+ D \|\ty_a - y_a\|^2, 
	\end{eqnarray*}
	thus by fixing $y_a$, then dividing both sides by $\|\ty_a - y_a\|$ and letting $ \|\ty_a - y_a\|$ to zero, we obtain
	\begin{eqnarray}\label{phiest}
	\|\frac{\partial \varphi}{\partial y}(t,\bx,y_a)\|_{\infty,[a,b]} &\leq& e^{(\log 2) \bar{N}_{[a,b]}(\bx)},\notag\\
	\| \frac{\partial \varphi}{\partial y}(t,\bx,y_a) \|_{\tp,[a,b]} &\leq&  \bar{N}^{\frac{p-1}{p}}_{[a,b]}(\bx)e^{(\log 2) \bar{N}_{[a,b]}(\bx)}.
	\end{eqnarray}
	Similarly if we fix $\ty_a$ then
	\begin{eqnarray}\label{phiestya2}
	\|\frac{\partial \varphi}{\partial y}(t,\bx,\ty_a)\|_{\infty,[a,b]} &\leq& e^{(\log 2) \bar{N}_{[a,b]}(\bx)},\notag\\
	\| \frac{\partial \varphi}{\partial y}(t,\bx,\ty_a) \|_{\tp,[a,b]} &\leq& \bar{N}^{\frac{p-1}{p}}_{[a,b]}(\bx)e^{(\log 2) \bar{N}_{[a,b]}(\bx)}.
	\end{eqnarray}
	Next for $\xi_t = \frac{\partial \varphi}{\partial y}(t,\bx,y_a) \xi_a$ and $\bar{\xi}_t = \frac{\partial \varphi}{\partial y}(t,\bx,\ty_a) \xi_a$, we consider the difference $r_t = \bar{\xi}_t - \xi_t$, which satisfies $r_a =0$ and
	\begin{eqnarray}\label{eqz1}
	r_{s,t} &=& \int_s^t \Big(Dg(\ty_u) \bar{\xi}_u - Dg(y_u) \xi_u\Big) dx_u\notag\\
	& = &\int_s^t \Big(Dg(\ty_u)  - Dg(y_u) \Big) \bar{\xi}_u dx_u +  \int_s^tDg(y_u) r_u dx_u \notag\\
	&=& e_{s,t} + \int_s^tDg(y_u) r_u dx_u.
	\end{eqnarray}
	Since \eqref{eqz1} has the form of \eqref{equationrt}, by the same arguments as in Step 2 of Theorem \ref{prop2} we obtain a similar estimate to \eqref{rEst}, i.e. there exists a constant $ M = M(p,[a,b], \ltn \bx \rtn_{\tp,[a,b]})>1 $ such that
	\begin{eqnarray}\label{eqz2}
	&&\|r\|_{\infty} \vee \ltn r,R^r \rtn_{\tp,[a,b]} \notag\\
	&\leq& 2M \Big(\|e^\prime \|_{\infty,[a,b}] + \ltn e^\prime \rtn_{\tp,[a,b]} + \ltn R^e \rtn_{\tq,[a,b]^2} \Big)\times \notag\\ 
	&& \times e^{(1+ \log 2) N_{\frac{1}{4MC_g},[a,b]}(\bx)},
	\end{eqnarray}
	where we use the fact that $r_a =0$. To estimate the terms in the right hand side of \eqref{eqz2}, we derive from \eqref{eqz1} that
	\[
	e^\prime_s =   \Big(Dg(\ty_s)  - Dg(y_s) \Big) \bar{\xi}_s = \int_0^1 D^2g\Big((1-\eta)y_s + \eta \ty_s\Big) [z_s ,\bar{\xi}_s] d\eta.
	\]
	Using \eqref{phiestya2} and the same estimates as in Step 3 of Theorem \ref{prop2}, we can prove that there exists a generic constant  $ M = M(p,[a,b], \ltn \bx \rtn_{\tp,[a,b]})>1 $ such that
	\begin{equation}\label{eqz3}
	\|e^\prime \|_{\infty,[a,b}], \ltn e^\prime \rtn_{\tp,[a,b]} \leq M \|z\|_{\tp,[a,b]}.
	\end{equation}
	Similarly, there exists a generic constant $M$ that
	\begin{equation}\label{eqz4}
	\|R^e \|_{\tq,[a,b]^2} \leq M \Big(\|z_a\| +\ltn z,R^z\rtn_{\tp,[a,b]} \Big).
	\end{equation}
	Replacing \eqref{eqz3} and \eqref{eqz4} into \eqref{eqz2}, we conclude that there exists a generic constant $ M>1 $ such that
	\begin{equation}\label{eqz2}
	\|r\|_{\infty} \vee \ltn r,R^r \rtn_{\tp,[a,b]} \leq M \Big(\|z_a\| +\ltn z,R^z\rtn_{\tp,[a,b]} \Big) \leq M \|z_a\|,
	\end{equation}
	where the last inequality is due to \eqref{ydiff}. This proves the global Lipschitz continuity of $ \frac{\partial \varphi}{\partial y}(t,\bx,y_a)$ w.r.t. $y_a$.
	
\end{proof}

We are now able to formulate and prove the existence theorem.
\begin{theorem}[Existence]\label{RDEexist}
	There exists a solution to \eqref{fSDE0} on the interval $[a,b]$ with the initial value $y_a$. 
\end{theorem}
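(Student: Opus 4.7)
The plan is to employ the Doss--Sussmann transformation to reduce \eqref{fSDE0} to an ordinary differential equation in a transformed variable. Let $\varphi(t,\bx,\xi)$ denote the solution at time $t$ of the pure rough equation \eqref{RDEpart} with initial condition $\varphi(a,\bx,\xi)=\xi$; its existence, Lipschitz continuity in $\xi$, and differentiability in $\xi$ are supplied by Proposition \ref{prop1}, Theorem \ref{prop2}, and Corollaries \ref{diffcor} and \ref{lipschitzcor}. Write $J(t,\xi) := \frac{\partial \varphi}{\partial y}(t,\bx,\xi)$ for the Jacobian. Corollary \ref{diffcor} applied to the backward flow shows that $J(t,\xi)$ is invertible for every $(t,\xi) \in [a,b]\times \R^d$, with inverse given by the derivative of the backward flow at $\varphi(t,\bx,\xi)$; the bounds \eqref{phiest} and their backward analog yield uniform boundedness of $J$ and $J^{-1}$ on $[a,b]\times\R^d$, while Corollary \ref{lipschitzcor} gives global Lipschitz continuity of both $J(t,\cdot)$ and $J(t,\cdot)^{-1}$ in $\xi$.

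Next I would make the ansatz $y_t = \varphi(t,\bx,z_t)$ with $z_a = y_a$, where $z:[a,b]\to \R^d$ is an absolutely continuous path to be determined. A formal application of the chain rule yields
\[
dy_t = g(\varphi(t,\bx,z_t))\, dx_t + J(t,z_t)\, dz_t,
\]
so that $y$ solves \eqref{fSDE0} precisely when $z$ solves the ODE
\[
\dot z_t = J(t,z_t)^{-1}\, f(\varphi(t,\bx,z_t)), \qquad z_a = y_a,\ t \in [a,b].
\]
Since $f$ is globally Lipschitz by $({\textbf H}_1)$, $\varphi(t,\bx,\cdot)$ is globally Lipschitz by Proposition \ref{prop1}, and $J(t,\cdot)^{-1}$ is bounded and Lipschitz, the right-hand side is globally Lipschitz in $z$ and continuous in $t$. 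The Picard--Lindel\"of theorem then produces a unique $C^1$ solution $z$ on the whole interval $[a,b]$, and I set $y_t := \varphi(t,\bx,z_t)$.

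The main obstacle is the rigorous justification that this $y$ is a Gubinelli solution of \eqref{fSDE0}, i.e.\ that $(y,g(y)) \in \cD^{2\alpha}_x([a,b])$ and that
\[
y_t = y_a + \int_a^t f(y_s)\, ds + \int_a^t g(y_s)\, dx_s.
\]
Since $z$ is of bounded variation, its rough integral against $\bx$ collapses to a Young integral and does not contribute to the L\'evy-area term; the controlled-rough-path structure of $y$ is inherited entirely from the $\xi$-dependence of $\varphi(\cdot,\bx,\xi)$ through Theorem \ref{prop2}. I would therefore establish a chain rule for the composition of the controlled path $\xi \mapsto \varphi(\cdot,\bx,\xi)$ with the smooth path $z_\cdot$ by a direct sewing-lemma computation analogous to \eqref{roughpvar}, carefully identifying $g(y)$ as the Gubinelli derivative of $y$ and separating the contributions $\int_a^t J(s,z_s)\,\dot z_s\, ds$ and $\int_a^t g(y_s)\,dx_s$ of the drift and rough parts. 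Combining with the ODE satisfied by $z$ gives $J(s,z_s)\dot z_s = f(y_s)$, which yields the desired integral equation. The delicate step is the sewing bookkeeping, because one must simultaneously track the Gubinelli compensator arising from the rough flow $\varphi$ and the perturbation induced by the smooth time-dependence of $z$.
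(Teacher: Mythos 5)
Your proposal follows essentially the same route as the paper's proof: the Doss--Sussmann ansatz $y_t=\varphi(t,\bx,z_t)$ with $z$ solving the ODE $\dot z_t = J(t,z_t)^{-1}f(\varphi(t,\bx,z_t))$ (which is precisely the paper's equation \eqref{RDEtransformed}, since $J(t,z_t)^{-1}=\frac{\partial\psi}{\partial h}(t,\bx,\varphi(t,\bx,z_t))$ by \eqref{identity}), followed by a sewing/partition verification that $y$ satisfies the Gubinelli integral equation. The only minor slip is that the right-hand side of the ODE is locally rather than globally Lipschitz in $z$ (the factor $f(\varphi(t,\bx,z))$ has linear growth), but local Lipschitzness together with linear growth --- which is exactly what the paper invokes --- still yields a unique solution on all of $[a,b]$.
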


%====================================
\begin{proof}
	The main idea is similar to \cite{riedelScheutzow}, which applies the Doss-Sussmann transformation \cite{Sus78} for the rough integral, thus it is enough to prove the existence of the solution for the transformed system. The proof is divided into several steps.
	
	{\bf Step 1}. 	Denote by $\varphi(t,\bx,y_a)$ the solution mapping of the rough equation \eqref{RDEpart}, and by $\psi(t,\bx,h_b)$ the solution mapping of the backward equation \eqref{backwardRDE}. Then $\varphi(t,\bx) \circ \psi(t,\bx) = I^{d \times d}$, and due to Theorem \ref{prop2} and Corollary \ref{diffcor}, $\varphi, \psi$ are continuously differentiable w.r.t. $y_a$ and $h_b$ respectively. In addition,
	\begin{equation}\label{identity}
	\frac{\partial \varphi}{\partial y}(t,\bx,y_a) \frac{\partial \psi}{\partial h}(t,\bx,y_b) = \frac{\partial \psi}{\partial h}(t,\bx,y_b) \frac{\partial \varphi}{\partial y}(t,\bx,y_a) = I^{d\times d}.
	\end{equation}
	Similar to \eqref{phiest} and \eqref{phiestya2}, we obtain 
	\begin{equation}\label{psiest}
	\Big\|\frac{\partial \psi}{\partial h}(t,\bx,y_b)\Big\|_{\infty,[a,b]} \leq e^{(\log 2) \bar{N}_{[a,b]}(\bx)}.
	\end{equation}
	Now consider the ordinary differential equation
	\begin{equation}\label{RDEtransformed}
	\dot{z}_t = \frac{\partial \psi}{\partial h}(t,\bx,\varphi(t,x,z_t)) f(\varphi(t,x,z_t)) = F(t,z_t),\quad t \in [a,b].	
	\end{equation}
	Similar to Corollary \ref{lipschitzcor}, $ \frac{\partial \psi}{\partial h}(t,\bx,z)$ is globally Lipschitz continuous in $z$. On the other hand $\varphi(t,x,z)$ is also globally Lipschitz continuous w.r.t. $z$. Hence  $ \frac{\partial \psi}{\partial h}(t,\bx,\varphi(t,x,z_t)) $ is globally Lipschitz continuous w.r.t. $z$. Because $f$ is also globally Lipschitz continuous and of linear growth, it follows that $F(t,z)$ in the right hand side of \eqref{RDEtransformed} satisfies the local Lipschitz continuity and also the linear growth. Hence there exists a unique solution for \eqref{RDEtransformed}.
	
	{\bf Step 2}. Next consider the transformation $y_t = \varphi(t,\bx,z_t)$ for $t\in [a,b]$, we are going to prove that $y_t$ is a solution of the equation
	\begin{equation}\label{RDEgbounded}
	dy_t=f(y_t)dt + g(y_t)dx_t. 
	\end{equation}
	Indeed, it first follows that 
	\begin{eqnarray}\label{RDEeq1}
	y_{s,t} &=& \varphi(t,\bx,z_t) - \varphi(t,\bx,z_s) + \varphi(t,\bx,z_s) - \varphi(s,\bx,z_s) \notag\\
	&=& \Big[\varphi(t,\bx,z_t) - \varphi(t,\bx,z_s)\Big] + \int_s^t g(\varphi(u,\bx,z_s))d x_u.  
	\end{eqnarray}
	The term in the square bracket satisfies, given \eqref{ydiff} and Theorem \ref{prop2},
	\begin{eqnarray}\label{RDEeq2}
	&& \|\varphi(t,\bx,z_t) - \varphi(t,\bx,z_s) - \frac{\partial \varphi}{\partial z}(s,\bx,z_s) z_{s,t} \| \notag\\
	&\leq& \|\varphi(t,\bx,z_t) - \varphi(t,\bx,z_s) - \frac{\partial \varphi}{\partial z}(t,\bx,z_s) z_{s,t} \| \notag\\
	&&+ \Big\|  \frac{\partial \varphi}{\partial z}(t,\bx,z_s)  - \frac{\partial \varphi}{\partial z}(s,\bx,z_s) \Big \| \|z_{s,t} \| \notag\\
	&\leq& D\|z_{s,t}\|^2 + \Big\| \int_s^t Dg(\varphi(u,\bx,z_s)) \frac{\partial \varphi}{\partial z}(u,\bx,z_s) dx_u \Big \| \|z_{s,t} \| \notag\\
	&\leq& D (t-s)^{1+\alpha}
	\end{eqnarray}
	for some generic constant $D$. Also, due to \eqref{RDEtransformed} 
	\begin{eqnarray}\label{RDEeq3}
	\frac{\partial \varphi}{\partial z}(s,\bx,z_s) z_{s,t} 
	&=& \frac{\partial \varphi}{\partial z}(s,\bx,z_s)F(s,z_s) (t-s) + D(t-s)^2 \notag\\
	&=& \frac{\partial \varphi}{\partial z}(s,\bx,z_s)\frac{\partial \psi}{\partial h}(s,\bx,\varphi(s,x,z_s)) f(\varphi(s,x,z_s))(t-s) \notag\\
	&& + D(t-s)^2 \notag\\
	&=& f(y_s) (t-s) + D(t-s)^2.
	\end{eqnarray}
	On the other hand, it follows from \eqref{roughEst} that
	\begin{eqnarray}
	\int_s^t g(\varphi(u,\bx,z_s))d x_u &=& g(\varphi(s,\bx,z_s)) \otimes x_{s,t} \notag\\
	&& + Dg(\varphi(s,\bx,z_s)) [\varphi(\cdot,\bx,z_s)]^\prime_s  \X_{s,t} + D (t-s)^{3\alpha} \notag\\
	&=& g(y_s)  \otimes x_{s,t} + Dg(y_s) g(y_s)  \X_{s,t}+ D (t-s)^{3\alpha}.
	\end{eqnarray}
	By combining \eqref{RDEeq1}-\eqref{RDEeq3}, we now can write
	\begin{equation}\label{RDEeq4}
	y_{s,t} = f(y_s) (t-s) + g(y_s)  \otimes x_{s,t} + Dg(y_s) g(y_s)  \X_{s,t}+ \mathcal{O} ((t-s)^{3\alpha}).
	\end{equation}
	Equation \eqref{RDEeq4} implies that $y$ is controlled by the driving path $x$ with $y^\prime_s = g(\varphi(s,\bx,z_s)) = g(y_s)$ and so is $g(y)$ with $[g(y)]^\prime_s = Dg(y_s)y^\prime_s = Dg(y_s)g(y_s)$, for all $s\in [a,b]$. Now take any finite partition $\Pi$ of $[s,t]$ for $|\Pi| = \max_{[u,v]\in \Pi} |v-u| \ll 1$, it follows from \eqref{RDEeq4} that
	\begin{eqnarray}\label{RDEeq5}
	y_{s,t} &=& \sum_{[u,v]\in \Pi} y_{u,v} \notag\\
	&=& \sum_{[u,v]\in \Pi} f(y_u) (v-u) \notag\\
	&&+ \sum_{[u,v]\in \Pi} \Big( g(y_u)  \otimes x_{u,v} + Dg(y_u) g(y_u)  \X_{u,v} \Big)  +  \sum_{[u,v]\in \Pi} \mathcal{O}((v-u)^{3\alpha}) \notag\\
	&=& \sum_{[u,v]\in \Pi} f(y_u) (v-u) \notag\\
	&&+ \sum_{[u,v]\in \Pi} \Big( g(y_u)  \otimes x_{u,v} + Dg(y_u) g(y_u)  \X_{u,v} \Big) + \mathcal{O} (|\Pi|^{3\alpha-1}).
	\end{eqnarray}
	Let $|\Pi| \to 0$, the first term in \eqref{RDEeq5} converges to $\int_s^t f(y_u)du$ while the second term converges to the Gubinelli rough integral $\int_s^t g(y_u)dx_u$. We conclude that 
	\[
	y_{s,t} = \int_s^t f(y_u)du + \int_s^t g(y_u)dx_u,
	\]
	which proves the existence part.
	
\end{proof}
The next result provides the norm estimates for the solution of \eqref{fSDE0}.
\begin{theorem}\label{RDEnorm}
	The supremum and $p$ - variation norms of the solution are estimated as follows
	\begin{eqnarray}
	\|y\|_{\infty,[a,b]} &\leq&  \Big[\|y_a\| + \Big( \frac{\|f(0)\|}{C_f}+ \frac{1}{C_p} \Big)N_{[a,b]}(\bx)\Big] e^{4C_f (b-a)}, \label{estx} \\
	\ltn y,R^y \rtn_{p{\rm -var},[a,b]} &\leq& \Big[\|y_a\| + \Big(\frac{\|f(0)\|}{C_f} + \frac{1}{C_p}\Big) N_{[a,b]}(\bx)\Big]\times \notag\\
	&& \times e^{4C_f(b-a) }N^{\frac{p-1}{p}}_{[a,b]}(\bx) -\|y_a\|, \label{estx2}
	\end{eqnarray}
	where $\ltn y,R^y \rtn_{\tp,[s,t]} :=\ltn y \rtn_{\tp,[s,t]} + \ltn R^y \rtn_{\tq,[s,t]^2}$. 
\end{theorem}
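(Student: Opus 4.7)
The plan is to follow the scheme of Proposition \ref{prop1} and Step 2 of Theorem \ref{RDElinear}, incorporating the drift term $\int_s^t f(y_u)\,du$ into the local estimates and then applying a continuous Gronwall argument on each interval of a greedy partition.

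First I would rewrite \eqref{fSDE0} in integral form
\[
y_{s,t} = \int_s^t f(y_u)\,du + \int_s^t g(y_u)\,dx_u, \qquad a \leq s \leq t \leq b,
\]
which shows that $y$ is controlled by $x$ with $y^\prime_s = g(y_s)$, $[g(y)]^\prime_s = Dg(y_s)g(y_s)$, and $R^y_{s,t} = \int_s^t f(y_u)\,du + \int_s^t g(y_u)\,dx_u - g(y_s)\otimes x_{s,t}$. The bounds in \eqref{Exist2} for $\ltn [g(y)]^\prime \rtn_{\tp}$ and $\ltn R^{g(y)} \rtn_{\tq}$ remain valid here. Since $\|g\|_\infty$ and $\|Dg(y)g(y)\| \leq C_g^2$ are bounded independently of $y$, applying \eqref{roughpvar} to $\int_s^t g(y_u)\,dx_u$ and repeating the chain of estimates leading to \eqref{yRpvar}, while adding the drift contribution $\leq C_f\int_s^t \|y_u\|\,du + \|f(0)\|(t-s)$ to both $\|y_{s,t}\|$ and $\|R^y_{s,t}\|$, produces on any subinterval $[s,t]$ with $\ltn \bx \rtn_{\tp,[s,t]} \leq \frac{1}{8C_pC_g}$ a local inequality of the form
\[
\ltn y,R^y \rtn_{\tp,[s,t]} \leq \frac{1}{C_p} + 4C_f\|y_s\|(t-s) + 4\|f(0)\|(t-s) + 4C_f\int_s^t \ltn y,R^y \rtn_{\tp,[s,u]}\,du.
\]

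Next I would apply the continuous Gronwall lemma exactly as in the derivation of \eqref{yqvarest} to obtain
\[
\ltn y,R^y \rtn_{\tp,[s,t]} \leq \Big(\|y_s\| + \frac{\|f(0)\|}{C_f} + \frac{1}{C_p}\Big)e^{4C_f(t-s)} - \|y_s\|
\]
on every such small subinterval. Then, constructing the greedy sequence $\{\tau_i(\frac{1}{8C_pC_g},[a,b],p)\}_{i \in \N}$ as in \eqref{greedytime} and iterating across consecutive pieces $[\tau_k,\tau_{k+1}]$ exactly as at the end of Step 2 in Theorem \ref{RDElinear}, an induction in $k$ using $\|y_{\tau_{k+1}}\| \leq \|y_{\tau_k}\| + \ltn y \rtn_{\tp,[\tau_k,\tau_{k+1}]}$ yields the supremum bound \eqref{estx}, while the inequality $\ltn y,R^y \rtn_{\tp,[a,b]} \leq N_{[a,b]}(\bx)^{(p-1)/p}\sum_k \ltn y,R^y \rtn_{\tp,[\tau_k,\tau_{k+1}]}$ together with \eqref{Nest} then gives \eqref{estx2}.

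The main obstacle will be the careful bookkeeping of all the constants in the local estimate: one must simultaneously track the contributions from $\|g(y_s)\|$, $\|Dg(y_s)g(y_s)\|$, $\ltn R^{g(y)}\rtn_{\tq}$ and $\ltn [g(y)]^\prime \rtn_{\tp}$ arising from the rough integral, plus the drift term, and ensure that the self-coefficient of $\ltn y,R^y \rtn_{\tp,[s,t]}$ on the right-hand side is absorbable when $\ltn \bx \rtn_{\tp,[s,t]}$ is sufficiently small. Unlike the linear case of Theorem \ref{RDElinear}, no $e^L \|y_s\|$ factor appears because $\|g\|_\infty$ is bounded rather than linearly growing, which is precisely why the final bounds \eqref{estx}--\eqref{estx2} have the simpler form without a $LN_{[a,b]}(\bx)$ term in the exponent.
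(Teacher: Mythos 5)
Your proposal is correct and follows essentially the same route as the paper's proof: integral form, local controlled-rough-path estimate with the drift added, continuous Gronwall on each piece, then iteration over a greedy sequence of stopping times. The only cosmetic difference is your threshold $\frac{1}{8C_pC_g}$ for the greedy sequence versus the paper's $\frac{1}{4C_pC_g}$, which is immaterial.
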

\begin{proof}
	To prove \eqref{estx}, rewrite \eqref{fSDE0} in the integral form
	\begin{equation}\label{Exist1}
	y_{s,t} = \int_s^t f(y_u)du + \int_s^t g(y_u) dx_u.
	\end{equation}
	Together with \eqref{gcond} and \eqref{roughpvar}, we obtain
	\allowdisplaybreaks
	\begin{eqnarray*}
		&&\|y_{s,t}\| \\
		&\leq& \int_s^t\|f(y_u)\| du + \Big\|\int_s^t g(y_u) dx_u\Big\| \\
		&\leq& \int_s^t (C_f \|y_u\| + \|f(0)\|)du + C_g \ltn x \rtn_{\tp,[s,t]} + C_g^2 \ltn \X \rtn_{\tq,[s,t]^2} \\
		&&+ C_p \Big\{2C_g^2\ltn \X \rtn_{\tq,[s,t]^2}\ltn y \rtn_{\tp,[s,t]} \\
		&&+\ltn x \rtn_{\tp,[s,t]}\Big[C_g \ltn R^y \rtn_{\tq,[s,t]^2} + \frac{1}{2}C_g^2 \ltn x \rtn_{\tp,[s,t]} \ltn y \rtn_{\tp,[s,t]}\Big]  \Big\}	\\
		&\leq& \int_s^t (C_f \|y_u\| + \|f(0)\|)du + C_g \ltn x \rtn_{\tp,[s,t]} + C_g^2 \ltn \X \rtn_{\tq,[s,t]^2} \\
		&&+ C_p \Big\{\Big[2C_g^2\ltn \X \rtn_{\tq,[s,t]^2} + \frac{1}{2}C_g^2\ltn x \rtn_{\tp,[s,t]}^2\Big] \vee C_g \ltn x \rtn_{\tp,[s,t]} \Big\}\times\\
		&& \times 	 \Big( \ltn y \rtn_{\tp,[s,t]} + \ltn R^y \rtn_{\tq,[s,t]^2}\Big),
	\end{eqnarray*}
	which yields
	\begin{eqnarray*}
		&&\ltn y \rtn_{\tp,[s,t]} \\
		&\leq& \int_s^t (C_f \|y_u\| + \|f(0)\|)du + C_g \ltn x \rtn_{\tp,[s,t]} + C_g^2 \ltn \X \rtn_{\tq,[s,t]^2} \\
		&&+ C_p \Big\{\Big[2C_g^2\ltn \X \rtn_{\tq,[s,t]^2} + \frac{1}{2}C_g^2\ltn x \rtn_{\tp,[s,t]}^2\Big] \vee C_g \ltn x \rtn_{\tp,[s,t]} \Big\}\times \\
		&& \times  \Big( \ltn y \rtn_{\tp,[s,t]} + \ltn R^y \rtn_{\tq,[s,t]^2}\Big).
	\end{eqnarray*}
	By similar arguments, we can show that
	\begin{eqnarray*}
		&&\ltn R^y \rtn_{\tq,[s,t]^2} \\
		&\leq& \int_s^t (C_f \|y_u\| + \|f(0)\|)du + C_g^2 \ltn \X \rtn_{\tq,[s,t]^2} \\
		&&+ C_p \Big\{\Big[2C_g^2\ltn \X \rtn_{\tq,[s,t]^2} + \frac{1}{2}C_g^2\ltn x \rtn_{\tp,[s,t]}^2\Big] \vee C_g \ltn x \rtn_{\tp,[s,t]} \Big\}\times \\
		&& \times \Big( \ltn y \rtn_{\tp,[s,t]} + \ltn R^y \rtn_{\tq,[s,t]^2}\Big).
	\end{eqnarray*}
	Therefore by assigning $\ltn y,R \rtn_{\tp,[s,t]} =\ltn y \rtn_{\tp,[s,t]} + \ltn R^y \rtn_{\tq,[s,t]}$, we obtain
	\begin{eqnarray}\label{yRnorm}
	&&\ltn y,R^y \rtn_{\tp,[s,t]} \notag\\
	&\leq& 2\int_s^t (C_f \ltn y\rtn_{\tp,[s,u]} +C_f\|y_s\|+ \|f(0)\|)du \notag\\
	&& + C_g \ltn x \rtn_{\tp,[s,t]}  +2C_g^2 \ltn \X \rtn_{\tq,[s,t]^2} \notag\\
	&&+C_p \Big\{\Big[2C_g^2\ltn \X \rtn_{\tq,[s,t]^2} + \frac{1}{2}C_g^2\ltn x \rtn_{\tp,[s,t]}^2\Big] \vee C_g \ltn x \rtn_{\tp,[s,t]} \Big\} \times \notag\\
	&& \times \ltn y,R^y \rtn_{\tp,[s,t]}.
	\end{eqnarray}
	Observe that if $2C_p C_g \ltn \bx \rtn_{\tp,[s,t]} <1$ then
	\begin{eqnarray*}
	&&2C_p C_g \ltn \bx \rtn_{\tp,[s,t]} \\
	&>&  C_p \Big\{\Big[2C_g^2\ltn \X \rtn_{\tq,[s,t]^2} + \frac{1}{2}C_g^2\ltn x \rtn_{\tp,[s,t]}^2\Big] \vee C_g \ltn x \rtn_{\tp,[s,t]} \Big\}.
	\end{eqnarray*}
	This follows that
	\[
	\ltn y,R^y \rtn_{p{\rm -var},[s,t]} \leq \int_s^t 4 C_f \ltn y\rtn_{p{\rm -var},[s,u]} du + 4(\|f(0)\|+ C_f \|y_s\|)(t-s) +\frac{1}{C_p}
	\]
	whenever $2C_p C_g \ltn \bx \rtn_{\tp,[s,t]} \leq \frac{1}{2}$. Applying the continuous Gronwall lemma, we obtain
	\allowdisplaybreaks
	\begin{eqnarray}\label{yqvarest}
	\ltn y,R^y \rtn_{p{\rm -var},[s,t]} &\leq&  4(\|f(0)\|+ C_f \|y_s\|)(t-s) + \frac{1}{C_p} \notag\\
	&&+ \int_s^t 4C_f e^{4C_f (t-u)} \Big[4(\|f(0)\|+ C_f \|y_s\|)(u-s) + \frac{1}{C_p}\Big] du \notag\\
	&\leq& \Big(\frac{\|f(0)\|}{C_f}+ \frac{1}{C_p}+\|y_s\|\Big)e^{4C_f(t-s)} - \|y_s\|
	\end{eqnarray}
	%%%%%%%%%%5
	whenever $4C_p C_g \ltn \bx \rtn_{\tp,[s,t]} \leq 1$. By constructing the greedy sequence of stopping times $\{\tau_i(\frac{1}{4C_pC_g},[a,b],p) \}$ and use similar estimates to the proof of \cite[Theorem 2.4]{duchong19}, we obtain \eqref{estx} and \eqref{estx2}.
	
\end{proof}

Theorem \ref{RDEnorm} enables us to prove the uniqueness of the solution of rough equation \eqref{fSDE0}.

\begin{theorem}[Uniqueness]\label{continuity}
	The solution $y_t(\bx,y_a)$ of \eqref{RDEgbounded} is uniformly continuous w.r.t. $y_a$. In particular, there exists a unique solution with respect to the initial condition $y_a$.
\end{theorem}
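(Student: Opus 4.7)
The strategy is to transfer the continuity and uniqueness question to the transformed ordinary differential equation \eqref{RDEtransformed} constructed in the proof of Theorem \ref{RDEexist}, exploiting the fact that the Doss--Sussmann transformation provides a bijective correspondence between solutions of \eqref{RDEgbounded} and solutions of \eqref{RDEtransformed}. Since the existence direction (solving the ODE, then pushing forward by $\varphi$) has already been carried out, the plan is to complete the other direction and then apply a routine Gronwall argument for the ODE.

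First I would prove the converse of what is established in Theorem \ref{RDEexist}: namely, if $y$ is any solution of \eqref{RDEgbounded} on $[a,b]$ with initial value $y_a$, then $z_t := \psi(t,\bx,y_t)$ is an absolutely continuous path satisfying \eqref{RDEtransformed} with $z_a = y_a$ (the latter because $\varphi(a,\bx,\cdot) = \mathrm{id}$, hence $\psi(a,\bx,\cdot) = \mathrm{id}$). This is a rough-path chain-rule computation parallel to \eqref{RDEeq1}--\eqref{RDEeq4}: expanding $y_{s,t} = \varphi(t,\bx,z_t) - \varphi(s,\bx,z_s)$ using the differentiability in Theorem \ref{prop2} and the identity \eqref{identity}, one rewrites the rough-integral term in $y_{s,t}$ as $g(y_s)\otimes x_{s,t} + Dg(y_s)g(y_s)\,\X_{s,t} + \mathcal{O}((t-s)^{3\alpha})$, matches it with the RDE expansion, and isolates $\tfrac{\partial\varphi}{\partial z}(s,\bx,z_s)\,z_{s,t} = f(y_s)(t-s) + \mathcal{O}((t-s)^{1+\delta})$, which gives $\dot{z}_s = \tfrac{\partial\psi}{\partial h}(s,\bx,y_s)f(y_s) = F(s,z_s)$. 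This step is the main technical obstacle, but it is essentially bookkeeping on the expansion already done in the existence proof.

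Next I would verify that $F(t,z)$ in \eqref{RDEtransformed} is globally Lipschitz in $z$, with a Lipschitz constant $L_F$ depending only on $\bar{N}_{[a,b]}(\bx)$, $C_f$, $C_g$ and $\|g\|_\infty$. Indeed, Proposition \ref{prop1} bounds the Lipschitz constant of $\varphi(t,\bx,\cdot)$ by $e^{(\log 2)\bar{N}_{[a,b]}(\bx)}$, Corollary \ref{lipschitzcor} (together with its backward analogue from Corollary \ref{diffcor}) bounds the Lipschitz constant of $\tfrac{\partial\psi}{\partial h}(t,\bx,\cdot)$, and $f$ is globally Lipschitz by hypothesis $({\textbf H}_1)$; composition and the uniform bound $\|g\|_\infty<\infty$ then yield global Lipschitz continuity of $F(t,\cdot)$ with a uniform constant on $[a,b]$.

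Finally, for two initial values $y_a,\bar{y}_a$, let $z_t,\bar{z}_t$ be the corresponding solutions of \eqref{RDEtransformed} with $z_a=y_a$ and $\bar{z}_a=\bar{y}_a$. The classical continuous Gronwall lemma gives
\[
\|z_t-\bar{z}_t\| \leq \|y_a-\bar{y}_a\|\,e^{L_F(b-a)},\qquad t\in[a,b].
\]
Transforming back via $y_t = \varphi(t,\bx,z_t)$ and $\bar{y}_t = \varphi(t,\bx,\bar{z}_t)$ and applying once more the Lipschitz continuity of $\varphi(t,\bx,\cdot)$ from Proposition \ref{prop1}, one obtains
\[
\|y_t-\bar{y}_t\| \leq e^{(\log 2)\bar{N}_{[a,b]}(\bx)}\,\|z_t-\bar{z}_t\| \leq \|y_a-\bar{y}_a\|\,e^{(\log 2)\bar{N}_{[a,b]}(\bx) + L_F(b-a)},
\]
which is the desired uniform continuity estimate. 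Specializing to $y_a=\bar{y}_a$ forces $y\equiv\bar{y}$, establishing uniqueness. A fully analogous argument for the controlled-rough-path $p$-variation norm, using the second half of \eqref{ydiff} instead of the supremum bound, yields the corresponding $p$-variation estimate if needed.
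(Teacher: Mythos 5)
The proposal takes a genuinely different route from the paper. The paper establishes uniqueness and continuity directly at the level of the rough differential equation: it writes the integral rough equation for the difference $z_t=\ty_t-y_t$ of two solutions, estimates $\ltn z,R^z\rtn_{\tp}$ using \eqref{roughest2}--\eqref{roughest3} together with the a priori norm bounds of Theorem \ref{RDEnorm}, and closes a Gronwall argument over the greedy stopping times. Your plan instead reduces both continuity and uniqueness to the transformed ODE \eqref{RDEtransformed} via the Doss--Sussmann correspondence, which is conceptually cleaner provided the correspondence is known to be a bijection between the two solution sets. However, there are two gaps in the proposal as written.

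First, the converse direction of the Doss--Sussmann correspondence --- that any solution $y$ of \eqref{RDEgbounded} yields an absolutely continuous $z_t:=\psi(t,\bx,y_t)$ solving \eqref{RDEtransformed} --- is not, as you suggest, mere bookkeeping parallel to \eqref{RDEeq1}--\eqref{RDEeq4}. The existence proof goes from the ODE to the RDE: there the time-increment of $\varphi(t,\bx,z_s)$ at fixed $z_s$ is by definition the rough integral $\int_s^t g(\varphi(u,\bx,z_s))dx_u$, so the rough terms appear for free. In the converse one must expand $\psi(t,\bx,y_t)-\psi(s,\bx,y_s)$ and show that the rough terms of order $(t-s)^{\alpha}$ and $(t-s)^{2\alpha}$ cancel \emph{exactly}, leaving a residual of order $(t-s)^{3\alpha}$; since $2\alpha<1$, an $O((t-s)^{2\alpha})$ remainder is not $o(t-s)$ and would destroy the claimed differentiability of $z$. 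Carrying out this cancellation requires (i) a rough Taylor expansion of $\psi$ to second order in the spatial variable (the paper only establishes $C^1$ with a globally Lipschitz derivative, Corollary \ref{lipschitzcor}, which controls the first remainder but not its rough structure), and (ii) a rough-path It\^o--Wentzell type formula giving the controlled-path expansion of the time-increment $\psi(t,\bx,h)-\psi(s,\bx,h)$ at fixed $h$, which the paper never derives. Neither of these is available from the results you cite, so this step is a genuine missing lemma rather than a reorganization of existing material.

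Second, the claim that $F(t,z)=\frac{\partial\psi}{\partial h}(t,\bx,\varphi(t,\bx,z))\,f(\varphi(t,\bx,z))$ is globally Lipschitz in $z$ is incorrect, and the paper itself only claims local Lipschitz continuity with linear growth in the existence proof. The matrix factor $\frac{\partial\psi}{\partial h}(t,\bx,\varphi(t,\bx,z))$ is indeed bounded and globally Lipschitz in $z$, but $f(\varphi(t,\bx,z))$ is unbounded (merely of linear growth under $({\textbf H}_1)$), and a product of a bounded Lipschitz factor with an unbounded Lipschitz factor is only locally Lipschitz: the cross term $\|\frac{\partial\psi}{\partial h}(t,\bx,\varphi(t,\bx,z_1))-\frac{\partial\psi}{\partial h}(t,\bx,\varphi(t,\bx,z_2))\|\,\|f(\varphi(t,\bx,z_2))\|$ grows linearly in $\|z_2\|$. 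Your Gronwall estimate with a single constant $L_F$ is therefore not valid as stated. This can be repaired by restricting $\|y_a\|,\|\ty_a\|\le r_0$ and using Theorem \ref{RDEnorm} to confine $z,\bar z$ to a ball, exactly as the paper does (``Provided that $\|y_a\|,\|\ty_a\|\le r_0$''), but then the constant in your final estimate depends on $r_0$ and the driver norm, not only on the quantities you list.
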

\begin{proof}
	The proof follows similar arguments as in Proposition \ref{prop1}. Indeed, let us consider two solutions $y_t(\bx,y_a)$ and $\ty_t(\bx,\ty_a)$ and their difference $z_t = \ty_t -y_t$, which satisfies the integral rough equation
	\[
	z_t = z_a +\int_a^t [f(\ty_s)-f(y_s)]ds+\int_a^t [g(\ty_s) - g(y_s)]dx_s.
	\]
	Then the estimate \eqref{roughest1} has the form
	\begin{eqnarray*}
		\|z_{s,t}\| &\leq& \Big\| \int_s^t [f(\ty_u)-f(y_u)]du\Big\| + \Big\| \int_s^t [g(\ty_u)-g(y_u)]dx_u\Big\| \notag\\
		&\leq& \int_s^t C_f \|z_u\|du +  C_g \|z_s\| \ltn x \rtn_{\tp,[s,t]} + 2C^2_g \|z_s\| \ltn \X \rtn_{\tq,[s,t]^2} \notag\\
		&& + C_p \Big\{\ltn x \rtn_{\tp,[s,t]} \ltn R^{g(\ty)-g(y)} \rtn_{\tq,[s,t]^2} 
		\\
		&&+ \ltn \X \rtn_{\tq,[s,t]^2} \ltn [g(\ty)-g(y)]^\prime\rtn_{\tp,[s,t]} \Big \},
	\end{eqnarray*}
	which, together with \eqref{roughest2}, \eqref{roughest3} and \eqref{estx2} yields
	\begin{eqnarray}\label{zRnorm}
	&&\ltn z,R^z \rtn_{\tp,[s,t]} \notag\\
	&\leq& 2\int_s^t C_f \|z_u\|du + 4C_p\Big\{C_g \ltn \bx \rtn_{\tp,[s,t]} \vee  C_g^2 \ltn \bx \rtn_{\tp,[s,t]}^2  \Big\} \times \notag \\
	&&\times  \Big(1 +\ltn \ty,R^{\ty} \rtn_{\tp,[s,t]} + \ltn y,R^y \rtn_{\tp,[s,t]} \Big) \Big(\|z_s\|+ \ltn z,R^z \rtn_{\tp,[s,t]} \Big)\notag\\
	&\leq& 2\int_s^t C_f \|z_u\|du + 4C_p\Big\{C_g \ltn \bx \rtn_{\tp,[s,t]} \vee  C_g^2 \ltn \bx \rtn_{\tp,[s,t]}^2  \Big\} \times \notag \\
	&&\times  \Big\{1 +2 \Big[\|y_a\| \vee \|\ty_a\|+ \Big(\frac{\|f(0)\|}{C_f} + \frac{1}{C_p}\Big) N_{[a,b]}(\bx)\Big]e^{4C_f(b-a) }N^{\frac{p-1}{p}}_{[a,b]}(\bx) \Big\}  \notag\\
	&& \times \Big(\|z_s\|+ \ltn z,R^z \rtn_{\tp,[s,t]} \Big).
	\end{eqnarray}	
	Provided that $\|y_a\|, \| \ty_a\| \leq r_0$, \eqref{zRnorm} has the form
	\begin{eqnarray*}
		\ltn z,R^z \rtn_{\tp,[s,t]} &\leq&  2\int_s^t C_f \Big(\|z_s\|+ \ltn z,R^z \rtn_{\tp,[s,u]} \Big)du \\
		&&+ 4C_p\Big\{C_g \ltn \bx \rtn_{\tp,[s,t]} \vee  C_g^2 \ltn \bx \rtn_{\tp,[s,t]}^2  \Big\}  \times\notag\\
		&& \times \Lambda(\bx,[a,b])\Big(\|z_s\|+ \ltn z,R^z \rtn_{\tp,[s,t]} \Big),
	\end{eqnarray*}
	where 
	\begin{eqnarray}
	&&\Lambda(\bx,[a,b]) = \Lambda(\bx,[a,b],r_0)\notag \\
	&=& 1 +2 \Big[r_0+ \Big(\frac{\|f(0)\|}{C_f} + \frac{1}{C_p}\Big) N_{[a,b]}(\bx)\Big]e^{4C_f(b-a) }N^{\frac{p-1}{p}}_{[a,b]}(\bx)>1.
	\end{eqnarray}
	Hence,
	\begin{equation}
	\|z_s\| + \ltn z,R^z \rtn_{\tp,[s,t]} \leq \|z_s\|+4\int_s^t C_f \Big(\|z_s\|+ \ltn z,R^z \rtn_{\tp,[s,u]} \Big)du 
	\end{equation}
	whenever $4C_p\Big\{C_g \ltn \bx \rtn_{\tp,[s,t]} \vee C_g^2 \ltn \bx \rtn_{\tp,[s,t]}^2  \Big\} \Lambda(\bx,[a,b]) \leq \frac{1}{2}$. Similar arguments to the proof of Theorem \ref{RDEnorm} then show that
	\begin{equation}\label{roughest4}
	\|z_a\|+\ltn z,R^z \rtn_{p{\rm -var},[a,b]} \leq \|z_a\|  e^{4C_f(b-a)+  \bar{N}_{[a,b]}(\bx)\log 2}\bar{N}_{[a,b]}(\bx)^{\frac{p-1}{p}}.
	\end{equation}
	where $\bar{N}_{[a,b]}(\bx)$ is the number of stopping times in the greedy sequence  
	\[
	\{\tau_i(\frac{1}{8C_p C_g \Lambda(\bx,[a,b])},[a,b],p) \},
	\]
	and could be estimated, using by \eqref{Nest}, as follows
	\begin{eqnarray*}
	\bar{N}_{[a,b]}(\bx) \leq \Big(1+ \Big[8C_p C_g \Lambda(\bx,[a,b])\Big]^p \ltn \bx \rtn_{p{\rm -var},[a,b]}^p \Big).
	\end{eqnarray*}
	This proves the continuity of the solution $y_t(\bx,y_a)$ on the initial condition $y_a$. In particular, when $\ty_a = y_a$, the right hand side of \eqref{roughest4} equals zero, which implies $\| \ty-y\|_{p{\rm -var},[a,b]} =0$, therefore the solution is unique given the initial condition $y_a$.
	
\end{proof}

%%%%%%%%%%%%%%%%%%%%%%%%%%%%%%%%%%%%%%%%%%%%%%%%%%%%%%%%5

\section*{Acknowledgments}
The author would like to thank the anonymous referee for her/his careful reading of the manuscript and insightful remarks which led to an improvement of the original submission. This work was supported by the Max Planck Institute for Mathematics in the Science (MIS-Leipzig). A part of this work was done while the author visited Vietnam Institute for Advanced Studies in Mathematics (VIASM) for a three month research stay in summer 2019.

\end{document}